\newtheoremstyle{break}
  {}
  {}
  {\itshape}
  {}
  {\bfseries}
  {.}
  {\newline}
  {}
\theoremstyle{break}
\newtheorem{defn}{Definition}[section]
\newtheorem{cor}[defn]{Corollary}
\newtheorem{lem}[defn]{Lemma}
\newtheorem{prop}[defn]{Proposition}
\newtheorem{rem}[defn]{Remark}
\newtheorem{thm}[defn]{Theorem}
\newtheorem{xmpl}[defn]{Example}
\newcommand{\abs}[1]{\left| #1 \right|}
\newcommand{\CN}{\mathbb{C}}
\newcommand{\codom}{\operatorname{codom}}
\newcommand{\D}{\mathrm{d}}
\newcommand{\dom}{\operatorname{dom}}
\newcommand{\FT}{\operatorname{\mathcal{F}}}
\newcommand{\one}{\operatorname{id}}
\newcommand{\iu}{\mathrm{i}}
\newcommand{\jb}[1]{\langle #1 \rangle}
\newcommand{\N}{\mathbb{N}}
\newcommand{\C}{\mathbb{C}}
\newcommand{\norm}[1]{\left\Vert #1 \right\Vert}
\newcommand{\dnorm}[1]{{\left\vert\kern-0.25ex\left\vert\kern-0.25ex\left\vert #1 
    \right\vert\kern-0.25ex\right\vert\kern-0.25ex\right\vert}}
\newcommand{\R}{\mathbb{R}}
\newcommand{\set}[1]{\left\{ #1 \right\}}
\newcommand{\setp}[2]{\left\{ #1 \, \Big| \, #2 \right\}}
\newcommand{\scp}[2]{\left(#1, #2 \right)}
\DeclareMathOperator{\supp}{supp}
\newcommand{\Z}{\mathbb{Z}}
\newcommand{\ph}{\varphi}
\newcommand{\Kll}[1]{\Biggl( #1 \Biggr)}
\newcommand{\jap}[1]{\left\langle #1 \right\rangle}
\newcommand{\sk}[2]{\left\langle #1 \,, \, #2 \right\rangle}
\newcommand{\Scl}{\mathcal{S}}
\newcommand{\Fcl}{\mathcal{F}}
\newcommand{\Lcl}{\mathcal{L}}
\begin{document}
\title[Exponentially Weighted Modulation Spaces]{Complex Interpolation and the Monotonicity in the Spatial Integrability Parameter of Exponentially Weighted Modulation Spaces}
\author{Leonid Chaichenets and Jan Hausmann}
\begin{abstract}
We introduce the notion of common retraction and coretraction for families of Banach spaces, formulate a framework for identifying interpolation spaces, and apply it to modulation spaces with exponential weights $E^s_{p,q}$. By constructing the domain of the common coretraction, we are able to prove $E^s_{o, q} \hookrightarrow E^s_{p, q}$ for $o \leq p$, i.e. the monotonicity in the spatial integrability parameter.
\end{abstract}
\maketitle

\section{Introduction}
Interpolation theory has been established in the late fifties of the last century with major contributions by Calderon, Gagliardo, Lions and Krejn and has since become ubiquitous in the field of function spaces and partial differential equations. Of course, nowadays many textbooks such as \cite{bergh1976}, \cite{triebel1978}, and \cite{lunardi2018} are available and typically cover the family of Besov spaces. More recent spaces, such as the modulation spaces $M_{p, q}^s$ introduced by Feichtinger in \cite{Feichtinger1983}, are usually not covered in books on interpolation theory. Unfortunately, interpolation results for such spaces are often assumed to be generally known in the community and their proofs are not explicit, because they follow the Besov space case quite closely. For example, the textbook on modulation spaces \cite[Subsection 11.3]{groechenig2001} only refers to the original publication \cite{Feichtinger1983} for interpolation results; the original publication \cite[Theorem 6.1 (D)]{Feichtinger1983}does not cover the case $p = \infty$ or $q = \infty$. In the latter case different interpolation methods are common and give different results, e.g. \cite[Theorem 2.3]{wang2007}.

In \cite{wang2006} exponentially weighted modulation spaces $E^s_{p,}$ were introduced and applied to study the nonlinear Schrödinger, Ginzburg-Landau and Navier-Stokes equations. In \cite{chen2021} Vlasov-Poisson-Fokker-Planck equation was studied in similar spaces and in \cite{feichtinger2021} modulation spaces with exponentially decaying weights were applied in the context of Navier-Stokes equations. To the best of the authors’ knowledge, no interpolation result was published for these spaces.

In the paper at hand, we introduce the notion of common retraction and coretraction for families of Banach spaces, thereby obtain a framework in which interpolation spaces can be easily identified, and apply our result to the case of exponentially weighted modulation spaces. As a by-product we obtain the monotonicity of $E_{p, q}^s$ w.r.t. the spatial integrability parameter $p$.

More precisely, our main results are as follows.
\begin{thm}
\label{thm:exp_modspace_p_embedding}
Let $d \in \N$, $s \in \R$ and $p_0, p_1, q \in [1,\infty]$ with $p_0 \leq p_1$. Then
\begin{align}
\label{eqn:exp_modspace_p_embedding}
E^s_{p_0, q} \hookrightarrow E^s_{p_1,q}. 
\end{align}
\end{thm}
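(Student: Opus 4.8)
The plan is to realise the embedding through the short-time Fourier transform (STFT), which plays the role of the common coretraction. Fix a nonzero Schwartz window $g$ and recall that $\norm{f}_{E^s_{p,q}}$ is equivalent to a weighted mixed-norm of
\[
V_g f(x,\xi) = \inte{\R^d} f(t)\,\overline{g(t-x)}\, e^{-2\pi \iu t \cdot \xi}\,\D t,
\]
where the spatial variable $x$ carries the integrability exponent $p$, and the frequency variable $\xi$ carries the exponent $q$ together with the exponential weight $w(\xi) = e^{s\abs{\xi}}$. Because $L^{p_0}$ does \emph{not} embed into $L^{p_1}$ on $\R^d$, the desired inclusion cannot come from a pointwise comparison of norms; the entire argument must exploit that the range of the coretraction consists exactly of those functions obeying the STFT reproducing formula. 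This is precisely the content of ``constructing the domain of the common coretraction''.

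First I would record the reproducing identity $V_g f = \norm{g}_2^{-2}\,(V_g f)\,\natural\,(V_g g)$, where $\natural$ is the twisted convolution on $\R^{2d}$, and pass to absolute values to obtain the pointwise majorisation
\[
\abs{V_g f} \le \norm{g}_2^{-2}\,\Kl{\abs{V_g f} * \abs{V_g g}}
\]
by the \emph{ordinary} convolution on $\R^{2d}$. Next I would absorb the weight using submultiplicativity: from the triangle inequality one has $s\abs{\xi} \le s\abs{\eta} + \abs{s}\abs{\xi-\eta}$ for every $s \in \R$, hence $w(\xi) \le w(\eta)\, e^{\abs{s}\abs{\xi-\eta}}$. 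Applying this inside the convolution yields
\[
w(\xi)\,\Kl{\abs{V_g f}*\abs{V_g g}}(x,\xi) \le \Kl{(w\,\abs{V_g f}) * (e^{\abs{s}\abs{\cdot}}\abs{V_g g})}(x,\xi).
\]

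Then I would invoke the mixed-norm Young inequality, taking first the $L^p$-norm in $x$ and then the weighted $L^q$-norm in $\xi$: with $F = w\,\abs{V_g f}$ and the kernel $G = e^{\abs{s}\abs{\cdot}}\abs{V_g g}$, and with $\tfrac{1}{a} = 1 - \tfrac{1}{p_0} + \tfrac{1}{p_1}$, one gets a bound of $\norm{f}_{E^s_{p_1,q}}$ by $\norm{F}_{L^{p_0}_x}$ in $x$ against $\norm{G}_{L^{a}_x}$ in $x$, with exponent $1$ in $\xi$ for the kernel. The hypothesis $p_0 \le p_1$ enters exactly here, since it forces $a \ge 1$; the exponent in $\xi$ is unchanged, which is why $q$ stays fixed. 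Chaining the three steps gives $\norm{f}_{E^s_{p_1,q}} \le C\,\norm{f}_{E^s_{p_0,q}}$ with $C = \norm{g}_2^{-2}\,\norm{e^{\abs{s}\abs{\cdot}}\abs{V_g g}}_{L^1_\xi(L^a_x)}$. Finally I would check $C < \infty$: since $g$ is Schwartz, $V_g g$ decreases faster than any polynomial in both variables, so $e^{\abs{s}\abs{\xi}}\abs{V_g g(x,\xi)}$ remains rapidly decaying and its $L^1_\xi(L^a_x)$ norm is finite for every $a \ge 1$ and every $s \in \R$.

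The main obstacle is conceptual rather than computational. The naive monotonicity $L^{p_0}\hookrightarrow L^{p_1}$ is false on $\R^d$, so the whole proof rests on confining $V_g f$ to the reproducing-kernel subspace, where the convolution structure upgrades the spatial integrability; this is the structural payoff of the common coretraction framework. The exponential weight is delicate only to the extent that one must keep the weighted kernel $e^{\abs{s}\abs{\cdot}}\abs{V_g g}$ integrable, and the super-exponential decay of the STFT of a Schwartz window is exactly what makes this succeed uniformly for all real $s$.
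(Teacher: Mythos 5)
Your route via the STFT reproducing formula is genuinely different from the paper's, and its skeleton (reproducing identity, submultiplicativity of the weight, mixed-norm Young's inequality with $\tfrac{1}{a} = 1 - \tfrac{1}{p_0} + \tfrac{1}{p_1}$) is the standard argument in the style of \cite{groechenig2001}; however, it has a genuine gap at the last step, and the gap sits exactly where the exponential weight bites. The claim ``$g$ is Schwartz, so $V_g g$ decays faster than any polynomial, so $e^{\abs{s}\abs{\cdot}}\abs{V_g g}$ has finite $L^1_\xi(L^a_x)$ norm'' is a non sequitur: super-polynomial decay does not absorb exponential growth. Concretely, let $g$ be a nonzero compactly supported $C^\infty$ bump, a perfectly admissible Schwartz window. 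Then $V_g g(0,\xi)$ is, up to constants, the Fourier transform of the compactly supported function $\abs{g}^2$; if $e^{a\abs{\xi}}\abs{\Fcl(\abs{g}^2)(\xi)}$ were bounded for some $a > 0$, then $\abs{g}^2$ would extend holomorphically to a tube around $\R^d$ by Paley--Wiener, contradicting compact support. Hence for every $s \neq 0$ your constant $C = \norm{g}_2^{-2}\,\norm{e^{\abs{s}\abs{\cdot}}\abs{V_g g}}_{L^1_\xi(L^a_x)}$ is infinite for such windows, and nothing in your argument selects the windows for which it is finite. The repair is to take the window in the Gelfand--Shilov class $\Scl_1$, e.g.\ the Gaussian of Equation \eqref{eqn:gauss}, whose ambiguity function $V_g g$ is again Gaussian in $(x,\xi)$ and therefore dominates $e^{\abs{s}\abs{\cdot}}$ for every $s \in \R$. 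You would in addition have to justify your opening ``recall'': the equivalence of $\norm{\cdot}_{E^s_{p,q}}$ with a weighted STFT mixed norm is not the textbook statement for $M^s_{p,q}$, because for $s < 0$ the elements of $E^s_{p,q}$ are ultradistributions in $\Scl_1'$ rather than tempered distributions; this equivalence must be cited or proved in that setting.

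Your structural premise is also backwards: you assert that the embedding ``cannot come from a pointwise comparison of norms,'' but the paper's proof is exactly such a comparison. Each piece $\Box_k f$ has Fourier support in a ball of fixed radius, so the Bernstein--Nikolskii inequality (Lemma \ref{lem:bernstein}) gives $\norm{\Box_k f}_{L_{p_1}} \lesssim \norm{\Box_k f}_{L_{p_0}}$ termwise, uniformly in $k$; multiplying by $2^{s\abs{k}}$ and taking the $\ell_q$ norm yields \eqref{eqn:exp_modspace_p_embedding} in two lines. What fails for general functions on $\R^d$ (namely $L_{p_0} \not\hookrightarrow L_{p_1}$) is restored for band-limited pieces, and the whole point of constructing the domain $\mathcal{W}$ of the common coretraction (Remark \ref{rem:bernstein}) is that $\Box_k f$ is well defined independently of $p$, which is what legitimizes the termwise estimate; in the setting of \cite{feichtinger2021} the decomposition $(f_k)$ depended on $p$ and this step was blocked. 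The paper's argument is shorter and sidesteps the window problem entirely; your approach, once repaired with a Gaussian window and the STFT norm equivalence over $\Scl_1'$, buys a proof independent of the uniform decomposition, but at a real cost in imported machinery.
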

Remark \ref{rem:bernstein} clarifies, how this embedding is connected to the notion of common retraction and coretraction.

\begin{thm}[Complex interpolation of $E^{s}_{p,q}$]
\label{theo:complexinterpolationAspq}
Let $d \in \N$ and $\theta \in (0, 1)$. Furthermore, let $p_0, p_1, q_1 \in [1, \infty]$, $q_0 \in [1, \infty)$ and $s_0,s_1 \in \R$. We set
\begin{align}
s = (1 - \theta)s_0 + \theta s_1, && \frac{1}{p} = \frac{1 - \theta}{p_0} + \frac{\theta}{p_1}, && \frac{1}{q} = \frac{1 - \theta}{q_0} + \frac{\theta}{q_1}, \label{eq:parametercomplexinterpolation}
\end{align}
with the convention $\frac{1}{\infty} = 0$. Then
\begin{align*}
[E^{s_0}_{p_0,q_0}(\R^d), E^{s_1}_{p_1,q_1}(\R^d)]_\theta = E^{s}_{p,q}(\R^d).
\end{align*}
with the equality in the set theoretical sense and the equivalence of the norms.
\end{thm}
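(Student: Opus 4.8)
The plan is to realize the entire family $E^{s}_{p,q}$ as a retract of a family of weighted vector-valued sequence spaces whose complex interpolation is classical, and to transport the resulting interpolation identity back along the retraction, using the framework of common retractions and coretractions introduced above. Write $\square_k = \FT^{-1}\sigma_k\FT$ for the frequency-uniform decomposition operators underlying the definition of $E^s_{p,q}$ and let $w_s$ be the associated exponential weight ($w_s(k) = e^{s|k|}$, say), so that by definition $\norm{f}_{E^s_{p,q}} = \norm{(\square_k f)_k}_{\ell^q_{w_s}(L^p)}$, where $\ell^q_{w_s}(L^p)$ is the space of sequences $(g_k)_{k\in\Z^d}$ in $L^p(\R^d)$ with $\norm{(g_k)_k}_{\ell^q_{w_s}(L^p)} = \norm{(w_s(k)\norm{g_k}_{L^p})_k}_{\ell^q}<\infty$. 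These sequence spaces are the reference family, playing the role of the domain of the common coretraction in the framework above (cf. Remark \ref{rem:bernstein}).

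First I would exhibit the coretraction $S\colon f \mapsto (\square_k f)_k$ and the retraction $R\colon (g_k)_k \mapsto \sum_k \widetilde{\square}_k g_k$, where $\widetilde{\square}_k$ is a fattened decomposition operator with $\widetilde{\square}_k \square_k = \square_k$, so that $RS = \one$. The crucial point is that $S$ and $R$ are built solely from the frequency decomposition and hence are independent of $s$, $p$ and $q$: they are \emph{common} to the whole family, which is exactly what the framework requires. Here $S$ is an isometry by definition, while the boundedness $R\colon \ell^q_{w_s}(L^p)\to E^s_{p,q}$ holds uniformly because $\square_j\widetilde{\square}_k$ vanishes unless $|j-k|$ is bounded, $\norm{\square_j\widetilde{\square}_k}_{L^p\to L^p}$ is uniformly bounded (Young's inequality for a fixed Schwartz convolution kernel), and the weight is moderate, i.e. $w_s(j)/w_s(k)$ is bounded for $|j-k|$ bounded. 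This makes $E^s_{p,q}$ a retract of $\ell^q_{w_s}(L^p)$.

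Granting the reference interpolation identity
\begin{align*}
[\ell^{q_0}_{w_{s_0}}(L^{p_0}),\ell^{q_1}_{w_{s_1}}(L^{p_1})]_\theta = \ell^{q}_{w_s}(L^p),
\end{align*}
the retraction theorem for the complex method yields that $R$ maps the left-hand interpolation space onto $[E^{s_0}_{p_0,q_0},E^{s_1}_{p_1,q_1}]_\theta$ with $\norm{f}_{[\cdots]_\theta}\sim \norm{Sf}_{[\cdots]_\theta}$; substituting the reference identity and reading off $\norm{Sf}_{\ell^q_{w_s}(L^p)} = \norm{f}_{E^s_{p,q}}$ gives both the set-theoretic equality and the norm equivalence, since $f\in[\cdots]_\theta \iff Sf\in\ell^q_{w_s}(L^p) \iff f\in E^s_{p,q}$ and conversely $f = R(Sf)\in R([\cdots]_\theta)$. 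To establish the reference identity I would invoke the classical complex interpolation of weighted vector-valued sequence spaces \cite{bergh1976}: the outer step produces $\ell^q$ with $1/q = (1-\theta)/q_0 + \theta/q_1$ and the geometric-mean weight $w_{s_0}^{1-\theta}w_{s_1}^{\theta} = w_s$ (this is exactly where $s=(1-\theta)s_0+\theta s_1$ enters), while the inner step is Calderón's identity $[L^{p_0},L^{p_1}]_\theta = L^p$ with $1/p = (1-\theta)/p_0+\theta/p_1$, valid for all $p_0,p_1\in[1,\infty]$.

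The step I expect to be the main obstacle is precisely this reference identity, and in particular the role of the hypothesis $q_0<\infty$. For the complex method the equality $[\ell^{q_0}(A_0),\ell^{q_1}(A_1)]_\theta = \ell^q([A_0,A_1]_\theta)$ holds as a genuine equality of sets with equivalent norms only when the outer exponent $q$ is finite; otherwise the interpolation space degenerates to the closure of $A_0\cap A_1$ and one loses the full space. The assumption $q_0\in[1,\infty)$ forces $1/q \ge (1-\theta)/q_0 > 0$, hence $q<\infty$, which is exactly what makes the outer interpolation exact; no such restriction is needed on $p_0,p_1$, since in the final answer $p=\infty$ can only occur when $p_0=p_1=\infty$, where the $L^p$ step is trivial. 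The remaining care concerns the boundary cases $p_0,p_1,q_1\in\{\infty\}$, where one must track the convention $\frac{1}{\infty}=0$ and verify that the density arguments still deliver the full space; these are subsumed by the $q<\infty$ reduction above.
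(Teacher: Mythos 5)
Your overall strategy coincides with the paper's: realize the family $E^s_{p,q}$ as a common retract of the weighted vector-valued sequence spaces $\ell^{s,e}_q(\Z^d,L^p)$, invoke the classical complex interpolation identity for those spaces (Example \ref{xmpl:interpollqs}, which is exactly where the hypothesis $q_0<\infty$ enters, as you correctly explain), and transport the identity back through the retract interpolation theorem; your boundedness argument for $R$ (uniform multiplier bounds for $\Box_j\widetilde{\Box}_k$ plus moderateness of the weight) is also the same Bernstein-type estimate the paper uses.

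However, there is a genuine gap at the foundation of your argument: you take for granted that $Sf=(\Box_k f)_k$ makes sense on an ambient space containing both $E^{s_0}_{p_0,q_0}$ and $E^{s_1}_{p_1,q_1}$, that ``$S$ is an isometry by definition,'' and that $\widetilde{\Box}_k\Box_k=\Box_k$ yields $R\circ S=\one$. Since $s_0,s_1$ are allowed to be negative, these spaces contain elements which are \emph{not} tempered distributions; they live only in the ultradistribution space $\Scl_1'$, and the paper shows explicitly that $\Box_k=\Fcl^{-1}\sigma_k\Fcl$ cannot be extended to $\Scl_1'$: if $\Box_k f$ were defined, pairing it with the Gaussian $g$ from Equation \eqref{eqn:gauss} would force the compactly supported function $\sigma_k g$ to lie in $\Scl_1$, but every element of $\Scl_1$ is analytic, so compactly supported elements vanish identically --- a contradiction. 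Consequently even the quantity $\norm{\Box_k f}_{L^p}$ appearing in the norm of $E^s_{p,q}$ only has meaning through a nontrivial construction: one must define the domain $\mathcal{W}$ of the coretraction as the set of $f\in\Scl_1'$ admitting a representation $f=\sum_l f_l$ with band-limited $f_l\in\Scl'$, define $S$ through such representations, and prove that this is independent of the representation and that $R\circ S=\one_{\mathcal{W}}$ (Propositions \ref{prop:coretraction} and \ref{prop:retraction}); similarly the domain $\mathcal{V}$ of $R$ must be specified via unconditional convergence in $\Scl_1'$. Your proposal has no counterpart for any of this, and your sentence identifying the sequence spaces $\ell^q_{w_s}(L^p)$ as ``the domain of the common coretraction'' indicates a misreading of Remark \ref{rem:bernstein}: that domain is $\mathcal{W}\subseteq\Scl_1'$, the space where $f$ itself lives, not the sequence-space side. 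Without constructing $\mathcal{W}$, $\mathcal{V}$ and the extended $S$, the pair $(E^{s_0}_{p_0,q_0},E^{s_1}_{p_1,q_1})$ is not even exhibited as a compatible couple equipped with a common retraction and coretraction, so Corollary \ref{cor:interpolation} (equivalently Theorem \ref{theo:Interpolationtheorem}) cannot be applied. Your outline would be a complete proof for the polynomially weighted spaces $M^s_{p,q}$ (this is Example \ref{xmpl:modulation}), but for $E^s_{p,q}$ the missing domain construction is precisely the technical heart of the matter.
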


\begin{rem}
   Let $d \in \N$, $s \in \R$, $\theta \in (0,1)$, $1 \leq p \leq \infty$. Then
	\begin{align}
		[E^{s}_{p,\infty}(\R^d),E^{s}_{p,\infty}(\R^d)]_\theta = E^{s}_{p,\infty}(\R^d) \label{eq:interpolationcaseinfty}
	\end{align}
	with the equality in the set theoretical sense and the equivalence of the norms.
    In general
    \begin{align*}
         [E^{s_0}_{p_0,\infty}(\R^d), E^{s_1}_{p_1,\infty}(\R^d)]_\theta \neq E^{s}_{p,\infty}(\R^d).
    \end{align*}
   There is a complex interpolation method $(\cdot,\cdot)_\theta$, where for example for modulation spaces
   \begin{align*}
       (M^{s_0}_{p_0,\infty}(\R^d),M^{s_1}_{p_1,\infty}(\R^d))_\theta = M^{s}_{p,\infty}(\R^d)
   \end{align*}
   (cf. \cite[Section 2]{wang2007})
\end{rem}

The remainder of the paper is structured as follows. In Section \ref{sec:besovmodul} we recall the definitions of Besov and modulation spaces. Subsequently, in Section \ref{sec:abstrinterpol}, we recall basic notions of interpolation theory and the complex interpolation method. Section \ref{sec:idinterpol} explains the problem of identifitation of abstract interpolation spaces and introduces the aforementioned framework in Defintion \ref{defn:comretract} and Corollary \ref{cor:interpolation}. Finally, modulation spaces with exponential weights are introduced in \ref{sec:modulexpweight} and our main Theorems \ref{thm:exp_modspace_p_embedding} and \ref{theo:complexinterpolationAspq} are proven.

\subsection*{Notation}
We denote the set of natural numbers by $\N = \set{1, 2, \ldots}$ and $\N \cup \set{0}$ by $\N_0$. Of course, $\Z$, $\R$ and $\CN$ are the sets of the integers, of the reals and of the complex numbers, respectively. We set $\R^+ \coloneqq \setp{x \in \R}{x > 0}$. On $\R^d$ and $\C^d$, $d \in \N$, we often shorten the (euclidean) norm $\norm{\cdot}_2$ to $\abs{\cdot}$ and denote the scalar product by $\scp{\cdot}{\cdot}$.

Throughout the paper we consider \emph{generalized} norms, i.e. if $(X, \norm{\cdot})$ is a normed space and $X \subseteq \mathcal{V}$ we interpret $\norm{x} = \infty$ for all $x \in \mathcal{V} \setminus X$ and have $X = \setp{x \in \mathcal{V}}{\norm{x} < \infty}$. Given two (generalized) norms $\norm{\cdot}, \dnorm{\cdot}: \mathcal{V} \to [0, \infty]$ we shall call them \emph{equivalent}, if there are constants $c, C \in (0, \infty)$ such that
\begin{align*}
c \norm{x} & \leq \dnorm{x} \leq C \norm{x}
\end{align*}
for all $x \in \mathcal{V}$. Note, that in the case that $\norm{\cdot}$ is a (classical) norm on $X = \setp{x \in \mathcal{V}}{\norm{x} < \infty}$ and $\dnorm{\cdot}$ is a (classical) norm on $Y = \setp{x \in \mathcal{V}}{\dnorm{x} < \infty}$ one has $X = Y$. For $r > 0$ and $x \in X$, where $(X, \norm{\cdot})$ is a normed space, we denote by $B_r(x) = \setp{y \in X}{\norm{x - y} < r}$ the open ball of radius $r$ around $x$. Of course, $B_r \coloneqq B_r(0)$.

We denote by $C_b(X,Y)$ the space of all bounded and continuous functions $f \colon X \to Y$ and by $S(\R^d)$ the Schwartz space. For the sake of brevity we often omit the underlying Euclidean space in the notation, e.g. we write $\Scl' = \Scl'(\R^d)$ for the space of tempered distributions on $\R^d$, $d \in \N$. The same is true for the regularity parameter $s = 0$, e.g. $M_{p, q} = M_{p, q}^0(\R^d)$ in the case of modulation spaces.

We denote the Fourier transform by $\Fcl$, its inverse by $\Fcl^{-1}$, and employ the symmetric choice of constants, i.e.
\begin{align*}
\left(\Fcl^{\pm 1} f\right)(\omega) & =
\frac{1}{\left(\sqrt{2 \pi}\right)^d} \int_{\R^d} f(x) e^{\mp \iu \scp{\omega}{x}} \D{x}.
\end{align*}

Moreover, we write $X \hookrightarrow Y$ for the fact that $X \subseteq Y$ and the inclusion map $\iota: X \to Y$ is continuous.

If $X,Y$ are normed vector spaces we denote by $\Lcl(X,Y)$ the space of all linear and continuous mappings $T \colon X \to Y$. More loosely, we will write $T \in \Lcl(X, Y)$, already if $X \subseteq \mathcal{V}$, $Y \subseteq \mathcal{W}$, and $T: \mathcal{V} \to \mathcal{W}$ is linear such that $T(X) \subseteq Y$ and $T|_X \in \Lcl(X, Y)$.
Of course, $\Lcl(X) \coloneqq \Lcl(X, X)$. If $T_i: X_i \to Y_i$ for both $i \in \set{0, 1}$ and $T_0|_{X_0 \cap X_1} = T_1|_{X_0 \cap X_1}$ we shall say that $T_0$ and $T_1$ agree (on $X_0 \cap X_1$).

In many estimates throughout the paper, if $A,B > 0$, we mean by $A \lesssim B$ that there exists a constant $C > 0$ such that $A \leq CB$. 

\section{Besov and modulation spaces} \label{sec:besovmodul}
In this section, we briefly recall the definition of Besov and modulation spaces. For comprehensive references, we refer to the textbook accounts \cite{Tri10}, \cite{Tri83}, \cite{triebel1978}, \cite{sawano2018} and \cite{groechenig2001}, respectively. First, let us consider Besov spaces. Fix any $\ph_0 \in C^\infty(\R^d)$ with $\supp(\ph_0) \subseteq \overline{B}_2$ and $\ph_0(\xi) = 1$ if $\abs{\xi} \leq 1$. We set for all $j \in \N$
\begin{align*}
\ph_j(\cdot) = \ph_0(2^{-j} \cdot) - \ph_0(2^{-j + 1} \cdot).
\end{align*}
Let us remark that $\sum_{j = 0}^\infty \ph_j \equiv 1$, $\supp(\ph_j) \subseteq \overline{B}_{2^{j+1}} \setminus B_{2^{j - 1}}$ for $j \in \N$ and therefore, for every $j, k \in \N_0$, $\supp(\ph_j) \cap \supp(\ph_k) = \emptyset$, unless $\abs{j - k} > 2$. For future use (see Example \ref{xmpl:besov}) let us define
\begin{align}
\label{eq:neighbors}
\Lambda_k \coloneqq
\begin{cases}
\set{0, 1}, & \text{for $ k = 0$}, \\
\set{k - 1, k, k + 1}, & \text{for $k > 0$},
\end{cases}
\end{align}
\emph{the set of indices neighboring $k \in \N_0$}.

Now let us define the \emph{dyadic decomposition operator} $\Delta_j \coloneqq \Fcl^{-1} \ph_j \Fcl$ for any $j \in \N_0$. Then, for any $d \in \N$, any $p, q \in [1, \infty]$ and any $s \in \R$, the Besov space $B_{p, q}^s(\R^d)$ is given by
\begin{align*}
B^s_{p,q}(\R^d) & \coloneqq \setp{f \in \Scl'(\R^n)}{\norm{f}_{B^s_{p,q}} < \infty},
\end{align*}
where
\begin{align*}
    \norm{f}_{B^s_{p,q}} & \coloneqq \left( \sum_{j = 0}^\infty 2^{sjq} \norm{\Delta_j f}_{L_p}^q\right)^{\frac{1}{q}}
\end{align*}
with the usual modification for $q = \infty$. Recall, that choosing a different $\tilde{\ph}_0$ yields the same Banach space $\tilde{B}_{p, q}^s = B_{p, q}^s$ and an equivalent norm $\norm{\cdot}_{\tilde{B}_{p, q}^s} \sim \norm{\cdot}_{B_{p, q}^s}$. \\

Let us now turn to modulation spaces. For them, the dyadic decomposition operators need to be replaced by the \emph{uniform decomposition operators} $\Box_k = \Fcl^{-1} \sigma_k \Fcl$, where, for all $k \in \Z^d$, $\sigma_k \in C^\infty$, $\supp(\sigma_k) \subseteq \overline{B}_{\sqrt{d}}(k)$ and $\sum_{k \in \Z^d} \sigma_k \equiv 1$. Moreover, the \emph{weight} $j \mapsto 2^{j s}$ is replaced by the Japanese bracket $k \mapsto \jap{k}^s = \sqrt{1 + \abs{k}^2}^s$. All in all, one has
\begin{align*}
    M^s_{p,q}(\R^d) & \coloneqq \setp{f \in \Scl'}{\norm{f}_{ M^s_{p,q}} \coloneqq \left(\sum_{k \in \Z^d} \jap{k}^{sq} \norm{\Box_k f}_{L_p}^q\right)^{1/q} < \infty}
\end{align*}
with the usual modification for $q = \infty$. Let us underline that, similar to the dyadic decomposition case, $\Box_k \Box_l = 0$, if
$\abs{k - l} > 2 \sqrt{d}$. Similarly to Equation \eqref{eq:neighbors}, we define
\begin{align}
\label{eqn:zero_neighbours}
\Lambda \coloneqq \set{k \in \Z^d \mid \abs{k} \leq 2\sqrt{d}},
\end{align}
\emph{the set of indices neighboring $0 \in \Z^d$}.

\section{Abstract Interpolation Theory} \label{sec:abstrinterpol}
In this section, we recall basic notions of the interpolation theory and the complex interpolation method. We refer to \cite{triebel1978}, \cite{bergh1976}, \cite{lunardi2018} and \cite{krein2002} for the textbook accounts. As usual, we call two Banach spaces $X_0$, $X_1$ \emph{compatible}, if and only if there exists a linear Hausdorff space $\mathcal{V}$ such that
$X_0, X_1 \hookrightarrow \mathcal{V}$. In this case, we call the pair $(X_0, X_1)$ an \emph{interpolation couple}. 

A well-known example of an interpolation couple is the case $X_0 = L_p(\R^d)$, $X_1 = L_q(\R^d)$ for some $d \in \N$ and $1 \leq p, q \leq \infty$. Here, one can take $\mathcal{V} = L_0(\R^d)$, i.e. the space of (equivalence classes of) measurable functions on $\R^d$ equipped with the topology of local convergence in measure. In fact, this topology is generated by the metric
\begin{align}
    (f,g) \mapsto d(f, g) & \coloneqq \int_{\R^d} \frac{\abs{f(x) - g(x)}}{1 + \abs{f(x) - g(x)}} e^{-x^2} \D{x}.
\end{align}
Another important example concerns the vector-valued weighted sequence spaces, i.e. for parameters $1 \leq p,q \leq \infty$, $s \in \R$, an index set $I$ and a (weight) function $\omega \colon I \to \R_+$, we consider
\begin{align}
    \ell^{s,\omega}_q(I,L_p) \coloneqq \set{(f_k)_{k \in I} \in (L_p)^{I} \, \Big\vert \, \Kll{\sum_{k \in I} \omega(k)^{qs} \norm{f_k}_{L_p}^q}^{1/q} < \infty}
\end{align}
with the usual modification for $q = \infty$. Two important examples are the \emph{exponential} and the \emph{polynomial} weights, i.e. for $I \in \set{\N_0, \Z^d}$
\begin{align}
i & \stackrel{\text{e}}{\mapsto} 2^{\abs{i}}, & i & \stackrel{\text{p}}{\mapsto} \jb{i}.
\end{align}
For $I \in \set{\N_0,\Z^d}$ and $X_i = \ell_{q_i}^{s_i, \omega}(I, L_{p_i}(\R^d))$ with $i \in \set{0, 1}$ one can choose $\mathcal{V} = \bigotimes_{z \in I} L_0(\R^d)$, i.e. the space of $I$-sequences of $L_0$-functions equipped with pointwise local convergence in measure.

We call a Banach space $X$ an \emph{intermediate space} w.r.t. the interpolation couple $(X_0, X_1)$ if
\begin{align*}
X_0 \cap X_1 \hookrightarrow X \hookrightarrow X_0 + X_1.
\end{align*}
Here, $X_0 \cap X_1$ is canonically equipped with the norm $x \mapsto \max \set{\norm{x}_{X_0}, \norm{x}_{X_1}}$, whereas
$X_0 + X_1$ is canoncially equipped with the quotient space norm
\begin{align*}
x \mapsto \inf_{\substack{x = x_0 + x_1 \\ x_0 \in X_0, x_1 \in X_1}} \norm{x_0}_{X_0} + \norm{x_1}_{X_1}.
\end{align*}
Of course, $X_0$ and $X_1$ are always intermediate spaces w.r.t. their arbitrary interpolation couple $(X_0, X_1)$.

Morphisms, i.e. structure preserving mappings, between the two interpolation couples $(X_0, X_1)$ and $(Y_0, Y_1)$ are linear maps $T: X_0 + X_1 \to Y_0 + Y_1$ such that $T \in \Lcl(X_i, Y_i)$ for both $i \in \set{0, 1}$. We write $T \in \Lcl{((X_0, X_1), (Y_0, Y_1))}$ in that case. Of course, given $T_i \in \Lcl{(X_i, Y_i)}$ for both $i \in \set{0, 1}$ one can extend them to $T \in \Lcl((X_0, X_1), (Y_0, Y_1))$ if and only if $T_0$ and $T_1$ agree on $X_0 \cap X_1$. In this case,
\begin{align*}
T(x) & \coloneqq T_0(x_0) + T_1(x_1)
\end{align*}
for any decomposition $x = x_0 + x_1 \in X_0 + X_1$.

An \emph{interpolation functor} $F$ is, loosely speaking, a method to construct intermediate spaces in a structure preserving way. More precisely, if $\hat{X} \coloneqq (X_0, X_1)$ and $\hat{Y} \coloneqq (Y_0, Y_1)$ are interpolation couples and $T \in \Lcl(\hat{X}, \hat{Y})$, then $X \coloneqq F(\hat{X})$ is an intermediate space w.r.t. $\hat{X}$, $Y \coloneqq F(\hat{Y})$ is an intermediate space w.r.t. $\hat{Y}$, and $T \in \Lcl(X, Y)$. This is referred to as the \emph{interpolation property}. We shall sometimes be more verbose and write $F(T) \coloneqq T|_{F(\hat{X})} \in \Lcl(X, Y)$.

A standard example is the \emph{complex interpolation functor} $F_\theta$, where $\theta \in (0, 1)$. One writes $(X_0, X_1) \mapsto F_\theta(X_0, X_1) \eqqcolon [X_0, X_1]_\theta$. Let us denote by
\begin{align*}
S & \coloneqq \setp{z \in \C}{0 < \operatorname{Re} z < 1}
\end{align*}
and by $\overline{S}$ its closure. For an interpolation couple $\hat{X} = (X_0, X_1)$, let $\Fcl(\hat{X})$ be the space of all bounded and continuous functions $f \colon \overline{S} \to X_0 + X_1$ such that
\begin{enumerate}[(i)]
    \item $f$ is holomorphic in $S$ and
    \item $t \mapsto f(j + it) \in C_b(\R, X_j)$ for both $j \in \set{0,1}$.
\end{enumerate}
The Banach space $\Fcl(\hat{X})$ is endowed with the norm
\begin{align*}
f \mapsto \norm{f}_{\Fcl(\hat{X})} \coloneqq
\max_{j \in \set{0,1}} \sup_{t \in \R} \norm{f(j + it)}_{X_j}.
\end{align*}
For any $\theta \in (0, 1)$ the \emph{complex interpolation space}
\begin{align*}
[X_0, X_1]_\theta & \coloneqq \setp{x \in X_0 + X_1}{\exists f \in \Fcl(\hat{X}):  f(\theta) = x}
\end{align*}
is equipped with the (quotient) norm
\begin{align*}
\norm{x}_{[X_0, X_1]_\theta} & \coloneqq \inf_{\substack{f(\theta) = x \\ f \in \Fcl(X_0,X_1)}} \norm{f}_{\Fcl(X_0,X_1)}.
\end{align*}
We shall call a Banach space $X$ an \emph{interpolation space}, if there is an interpolation functor $F$ and an interpolation couple $\hat{X} = (X_0, X_1)$ such that $X =F(\hat{X})$. Here, the equality means the equality in set theoretical sense and the equivalence of the norms.

\section{Identifying Interpolation Spaces} \label{sec:idinterpol}
Often, one can identify an (abstract) interpolation space $X$ with a concrete (i.e. a known) Banach space. The classical example is the theorem of Riesz-Thorin (cf. \cite[Example 2.11]{lunardi2018}) stating that $[L_p, L_q]_\theta = L_r$ (with equality of sets and \emph{equality} of norms), where $\theta \in (0, 1)$, $p, q \in [1, \infty]$, and
\begin{align*}
    \frac{1}{r} = \frac{1 - \theta}{p} + \frac{\theta}{q}.
\end{align*}
More generally, one has the following example.

\begin{xmpl}[{cf. \cite[Theorem 1.18.1]{triebel1978}}] \label{xmpl:interpollqs}
    Let $I$ be a countable index set, $\omega \colon I \to \R_+$, $\theta \in (0, 1)$, $s_0, s_1 \in \R$ and $p_0, p_1, q_0, q_1 \in [1, \infty]$ with $\min(q_0, q_1) < \infty$. Set
    \begin{align}
        \frac{1}{p} & = \frac{1 - \theta}{p_0} + \frac{\theta}{p_1} &
        \frac{1}{q} & = \frac{1 - \theta}{q_0} + \frac{\theta}{q_1},&
        s & = (1 - \theta)s_0 + \theta s_1. \label{eq:parameters}
    \end{align}
    Then
    \begin{align}
[\ell^{s_0,\omega}_{q_0}(I,L_{p_0}), \ell^{s_1,\omega}_{q_1}(I,L_{p_1})]_{\theta} = \ell^{s, \omega}_q(I,L_p). \label{eq:interpolationseq}
\end{align}
with the equality in the set theoretical sense and the \emph{equality} of the norms.
\end{xmpl}

An important tool for identification of interpolation spaces is the \emph{retraction-coretraction-method}. At the heart of the method is the identification of an interpolation space with a certain complemented subspace of a known interpolation space. The notions of retraction and coretraction come from the category theory. There, one calls $R$ \emph{retraction} and (a corresponding) $S$ \emph{coretraction}, if
$R \circ S = \one$. In this paper, we do not require this level of generality, but only two special cases: The first one is the case of Banach spaces $X, Y$.
If there are $R \in \Lcl(X, Y)$ and $S \in \Lcl(Y,X)$ such that
\begin{align*}
R \circ S = \one_Y
\end{align*}
we call $R$ retraction and $S$ (a corresponding) coretraction and $Y$ a \emph{retract} of $X$. The second case is the case of interpolation couples $\hat{X} = (X_0, X_1)$, $\hat{Y} = (Y_0, Y_1)$. The only change is that now $R \in \Lcl(\hat{X}, \hat{Y})$, $S \in \Lcl(\hat{Y}, \hat{X})$, and $R \circ S = \one_{Y_0 + Y_1}$. One has that $\hat{Y}$ is a retract of $\hat{X}$, if and only if $Y_j$ is a retract of $X_j$ for both $j \in \set{0,1}$ and their retractions and coretractions agree on $X_0 \cap X_1$ and on $Y_0 \cap Y_1$, respectively. For reasons which are apparent in the case of, e.g., Besov spaces and for the convenience of the readers unfamiliar with the category theory, we shall call a retraction $R$ also a \emph{synthesis operator} and a coretraction $S$ also an \emph{analysis operator}.

In the case of Banach spaces $X, Y$, a retraction $R \in \Lcl(X, Y)$, and a corresponding coretraction $S \in \Lcl(Y, X)$ the operator $P \coloneqq S \circ R \in \Lcl(\hat{X})$ is a continuous projection and therefore its image $P(X)$ is closed. By the open mapping theorem, $S: Y \to P(X)$ is an isomorphism. Hence, $Y$ is isomorphic to the complemented subspace $P(X) \subseteq X$.

The aforementioned retraction-coretraction-method is summarized in the following theorem.
\begin{thm}[Retract interpolation theorem (see {\cite[Theorem 1.2.4]{triebel1978}})]
\label{theo:Interpolationtheorem}
Let $\hat{X} = (X_0, X_1)$, $\hat{Y} = (Y_0,Y_1)$ be interpolation couples, $X$ an interpolation space of $\hat{X}$, $Y$ an intermediate space w.r.t. $\hat{Y}$, and $R \in \Lcl(\hat{X}, \hat{Y})$ be the retraction with a corresponding coretraction $S \in \Lcl(\hat{Y}, \hat{X})$. Moreover, consider an interpolation functor $F$ such that $X = F(\hat{X})$, $R \in \Lcl(X, Y)$ and $S \in \Lcl(Y, X)$. Then $Y = F(\hat{Y})$, with equality of sets but only \emph{equivalence} of norms.
\end{thm}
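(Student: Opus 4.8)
The plan is to establish the two continuous embeddings $F(\hat{Y}) \hookrightarrow Y$ and $Y \hookrightarrow F(\hat{Y})$ separately; together with the generalized-norm convention this yields both the set-theoretic equality $Y = F(\hat{Y})$ and the equivalence of norms. Write $\tilde{Y} \coloneqq F(\hat{Y})$ for brevity. The first thing I would record is which mapping properties are at my disposal. Since $\tilde{Y}$ is, by the interpolation property of $F$, an intermediate space w.r.t. $\hat{Y}$, and $Y$ is assumed to be one as well, both $Y$ and $\tilde{Y}$ are contained in $Y_0 + Y_1$; in particular the identity $R \circ S = \one_{Y_0 + Y_1}$ applies to every element of either space. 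Applying the interpolation property to the couple morphisms $R \in \Lcl(\hat{X}, \hat{Y})$ and $S \in \Lcl(\hat{Y}, \hat{X})$ gives $R \in \Lcl(X, \tilde{Y})$ and $S \in \Lcl(\tilde{Y}, X)$, while the hypotheses of the theorem furnish the complementary pair $R \in \Lcl(X, Y)$ and $S \in \Lcl(Y, X)$.

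For the embedding $\tilde{Y} \hookrightarrow Y$ I would take $y \in \tilde{Y}$ and push it through $X$: first $S y \in X$ with $\norm{S y}_X \lesssim \norm{y}_{\tilde{Y}}$ by the interpolation property, then $R(S y) \in Y$ with $\norm{R S y}_Y \lesssim \norm{S y}_X$ by hypothesis. Because $y \in Y_0 + Y_1$ we have $R S y = y$, so in fact $\norm{y}_Y \lesssim \norm{y}_{\tilde{Y}}$. The reverse embedding $Y \hookrightarrow \tilde{Y}$ is entirely symmetric, only swapping the roles of the two mapping properties: for $y \in Y$ one has $S y \in X$ with $\norm{S y}_X \lesssim \norm{y}_Y$ by hypothesis, then $R(S y) \in \tilde{Y}$ with $\norm{R S y}_{\tilde{Y}} \lesssim \norm{S y}_X$ by the interpolation property, and again $R S y = y$ yields $\norm{y}_{\tilde{Y}} \lesssim \norm{y}_Y$. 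Chaining the two estimates gives $Y = \tilde{Y} = F(\hat{Y})$ with equivalent norms.

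There is no genuine analytic obstacle here; the entire content is the bookkeeping of which of the four continuity statements for $R$ and $S$ comes from the abstract interpolation property of $F$ and which is an assumption on the concrete intermediate space $Y$, together with the observation that $R \circ S = \one$ may be invoked on all of $Y_0 + Y_1$. The only point worth flagging is that equality of norms cannot be expected even when $F$ is norm-exact: each composition $R \circ S = \one$ introduces the multiplicative constant $\norm{R}\,\norm{S}$, and for the concrete retraction--coretraction pairs used later (e.g. for Besov or modulation spaces) these operator norms typically exceed $1$, so only the equivalence of norms survives.
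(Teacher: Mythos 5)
Your proof is correct, and it takes a genuinely different route from the one in the paper. The paper follows Triebel's classical argument: with $P \coloneqq S \circ R$ it regards both $F(\hat{Y})$ and $Y$ as isomorphic copies (via $S$) of the complemented subspace $P(X) \subseteq X$, the inverse estimates being supplied by the open mapping theorem, and then chains the equivalences $\norm{\,\cdot\,}_{F(\hat{Y})} \sim \norm{S\,\cdot\,}_{F(\hat{X})} \sim \norm{S\,\cdot\,}_{X} \sim \norm{\,\cdot\,}_{Y}$. You instead establish the two continuous embeddings $F(\hat{Y}) \hookrightarrow Y$ and $Y \hookrightarrow F(\hat{Y})$ directly: in each direction one of the needed continuity statements comes from the interpolation property of $F$ (together with the hypothesis $X = F(\hat{X})$), the other from the assumptions on $Y$, and the two are glued by the identity $R \circ S = \one_{Y_0 + Y_1}$, which indeed applies to elements of either space since both are intermediate w.r.t. $\hat{Y}$; your bookkeeping of these four continuity statements is accurate, so the argument is complete. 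The gain is that your route is strictly more elementary: no projection $P$, no closedness of its range, and in particular no open mapping theorem --- the inverse estimate is obtained simply as $\norm{y}_{F(\hat{Y})} = \norm{R S y}_{F(\hat{Y})} \lesssim \norm{S y}_{F(\hat{X})}$. This is noteworthy because the remark following the theorem in the paper attributes the loss of norm equality precisely to the invocation of the open mapping theorem and states that the authors see no easy way to avoid it; your proof shows that it can be avoided, and that the loss of equality really stems from the operator-norm constants of $R$ and $S$ (and from the mere equivalence, not equality, of norms in $X = F(\hat{X})$), exactly as you flag at the end. What the paper's formulation buys in return is the structural picture of the retraction--coretraction method --- $Y$ and $F(\hat{Y})$ realized as one and the same complemented subspace of the known interpolation space $X$ --- which matches the discussion preceding the theorem and the way the method is quoted from the literature.
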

\begin{proof}
Due to the interpolation property of $F$, we have
\begin{align*}
S & \in \Lcl(F(\hat{Y}),F(\hat{X})), & R &\in \Lcl(F(\hat{X}), F(\hat{Y})).
\end{align*}
By assumption, $Y$ is a retract of $X$ (with the retraction $R$ and a corresponding coretraction $S$). Let us denote $P \coloneqq S \circ R$. By the above, $F(S)$ is an isomorphism onto $P(F(\hat{X})) = P(X)$ and $S|_Y$ is an isomorphism onto $P(X)$, respectively. Invoking the assumption $X = F(\hat{X})$ one obtains
\begin{align}
\label{eqn:norm_equiv}
\norm{\,\cdot\,}_{F(\hat{Y})} & \sim \norm{S\,\cdot\,}_{F(\hat{X})} \sim \norm{S\,\cdot\,}_{X} \sim \norm{\,\cdot\,}_{Y}
\end{align}
finishing the proof.
\end{proof}

\begin{rem}
Let us again emphasize that one obtains only \emph{equivalence} and not \emph{equality} of norms on $Y$ and $F(\hat{Y})$ in Theorem \ref{theo:Interpolationtheorem}. The reason for that is the invocation of the open mapping theorem in Equation \eqref{eqn:norm_equiv} and we see no easy way to avoid it.
\end{rem}

Let us demonstrate how the complex interpolation spaces of Besov spaces are identifed as Besov spaces by Theorem \ref{theo:Interpolationtheorem}.
\begin{xmpl}
\label{xmpl:besov}
Fix any $d \in \N$, $s_0, s_1 \in \R$, and $p_0, p_1, q_0, q_1 \in [1, \infty]$ such that $\min \set{q_0, q_1} < \infty$. Notice that the norm on the Besov space $B_{p, q}^s$ is given by
\begin{align*}
    \norm{f}_{B^s_{p,q}} &= \norm{(\norm{\Delta_j f}_{L_p})_{j \in \N_0}}_{\ell_q^{s,e}}.
\end{align*}
Set
\begin{align*}
\hat{X} & \coloneqq
\left(l^{s_0, \text{e}}_{q_0}(\N_0, L_{p_0}(\R^d)),
l^{s_1, \text{e}}_{q_1}(\N_0, L_{p_1}(\R^d)) \right), \\ 
\hat{Y} & \coloneqq
\left(B_{p_0, q_0}^{s_0}(\R^d), B_{p_1, q_1}^{s_1}(\R^d)\right).
\end{align*}
Furthermore, let $F$ denote the complex interpolation functor and consider any $\theta \in (0,1)$. Set
\begin{align}
    \frac{1}{q} & \coloneqq \frac{1 - \theta}{q_0} + \frac{\theta}{q_1}, & \frac{1}{p} & \coloneqq \frac{1 - \theta}{p_0} + \frac{\theta}{p_1}, & s & \coloneqq (1 - \theta)s_0 + \theta s_1. \label{eq:param}
\end{align}
By Example \ref{xmpl:interpollqs} one has
\begin{align*}
X_\theta & \coloneqq l^{s, \text{e}}_q(\N_0,L_p(\R^d)) = F(\hat{X}).
\end{align*}
We set $Y_\theta \coloneqq B^s_{p,q}(\R^d)$ and define formally
\begin{align}
\label{eqn:retcor}
S f & \coloneqq (\Delta_k f)_{k \in \N}, & R(f_k) & \coloneqq \sum_{k \in \N_0} \sum_{l \in \Lambda_k} \Delta_{l} f_k.
\end{align}
Then, by \cite[Step 7 of the proof of Theorem 2.3.2]{triebel1978}, one has that $S \in \Lcl(Y_i, X_i)$, $R \in \Lcl(X_i,Y_i)$, and $R \circ S = \operatorname{id}_{Y_i}$ for all $i \in \set{0, \theta, 1}$. In particular, $S \colon Y_\theta \to S(Y_\theta)$ is an isomorphism onto a complemented subspace of $X_\theta$. Since $X_\theta$ is an interpolation space w.r.t. $\hat{X}$, $Y_\theta$ is an intermediate space w.r.t. $\hat{Y}$. Theorem \ref{theo:Interpolationtheorem} hence implies
\begin{align*}
    \left[B_{p_0, q_0}^{s_0}(\R^d), B_{p_1, q_1}^{s_1}(\R^d) \right]_\theta  & = B^s_{p,q}(\R^d).
\end{align*}
\end{xmpl}

\begin{rem}
Let us remark that for all possible parameters $p, q, s$ the definition of $R$ and $S$ from Equation \eqref{eqn:retcor} made sense on $l_{q}^s(L^p)$ and $B_{p, q}^s$ respectively and $R$ is a retraction with the corresponding coretraction $S$ there. 

More generally, the coretraction $S$ is defined on the whole of $\Scl'$, the natural domain of the retraction is
\begin{align}
\dom R = \biggl\{(f_k)_{k \in \N_0} &\in (\Scl')^{\N_0} \, \Big\vert \, \sum_{k = 0}^\infty \sum_{l \in \Lambda_k} \Delta_{k + l} f_k \\
&\text{ converges unconditionally in } \Scl'\biggr\}, \notag
\end{align}
the compositions $R \circ S = \one_{\dom(S)}$ and $S \circ R$ make sense, as
$\codom(S) = \dom(R)$, $\codom(R) = \dom(S)$.
\end{rem}

This gives rise to the following definition. 
\begin{defn}[Common retraction and coretraction] \label{defn:comretract}
Let $I$ be a non-empty index set, $\mathcal{X} \coloneqq (X_i)_{i \in I}$ and $\mathcal{Y} \coloneqq (Y_i)_{i \in I}$ be families of Banach spaces such that $(X_i, X_j)$ and $(Y_i, Y_j)$ are interpolation couples for all $i, j \in I$. Assume that there exists a set $\mathcal{V}$ and $\mathcal{W}$ such that $X_i \subseteq \mathcal{V}$, $Y_i \subseteq \mathcal{W}$ for all $i \in I$. Then functions $R: \mathcal{V} \to \mathcal{W}$ and $S: \mathcal{W} \to \mathcal{V}$ are called \emph{common retraction and corresponding coretraction} (w.r.t to the families $\mathcal{X}$ and $\mathcal{Y}$), if
$R \circ S = \one_{\mathcal{W}}$ and $R\vert_{X_i} \in \Lcl(X_i, Y_i)$, $S\vert_{Y_i} \in \Lcl(Y_i, X_i)$ for all $i \in I$.
\end{defn}

The corollary below follows from Theorem \ref{theo:Interpolationtheorem} exactly as Example \ref{xmpl:besov}.

\begin{cor}
\label{cor:interpolation}
Let $R$ and $S$ be common retraction and corresponding coretraction w.r.t to the families $\mathcal{X} \coloneqq (X_i)_{i \in I}$ and $\mathcal{Y} \coloneqq (Y_i)_{i \in I}$. Furthermore, let $F$ be an interpolation functor such that for every $i, j \in I$ the space $F(X_i,X_j) = X_k$ for some $k \in I$. Then, $F(Y_i, Y_j) = Y_k$.
\end{cor}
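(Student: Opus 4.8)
The plan is to reduce Corollary \ref{cor:interpolation} to a direct application of the Retract Interpolation Theorem \ref{theo:Interpolationtheorem}, mimicking Example \ref{xmpl:besov} verbatim but with the abstract common (co)retraction in place of the concrete Besov synthesis and analysis operators. Fix $i, j \in I$ and set $\hat{X} \coloneqq (X_i, X_j)$ and $\hat{Y} \coloneqq (Y_i, Y_j)$; by hypothesis both are interpolation couples. The first step is to observe that the restrictions of the common retraction $R$ and coretraction $S$ assemble into morphisms of interpolation couples. Indeed, since $R\vert_{X_i} \in \Lcl(X_i, Y_i)$ and $R\vert_{X_j} \in \Lcl(X_j, Y_j)$ are both restrictions of the single map $R \colon \mathcal{V} \to \mathcal{W}$, they trivially agree on $X_i \cap X_j$, so $R \in \Lcl(\hat{X}, \hat{Y})$; symmetrically $S \in \Lcl(\hat{Y}, \hat{X})$. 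The identity $R \circ S = \one_{\mathcal{W}}$ restricts to $R \circ S = \one_{Y_i + Y_j}$, which is exactly the retraction-coretraction relation required at the level of couples.

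Next I would invoke the given assumption $F(X_i, X_j) = X_k$ for some $k \in I$. This supplies the hypothesis of Theorem \ref{theo:Interpolationtheorem} that $X \coloneqq F(\hat{X}) = X_k$ is itself a member of the family $\mathcal{X}$, hence a genuine interpolation space of $\hat{X}$. Because $R\vert_{X_k} \in \Lcl(X_k, Y_k)$ and $S\vert_{Y_k} \in \Lcl(Y_k, X_k)$ hold by the defining property of a common (co)retraction applied to the single index $k$, we obtain $R \in \Lcl(X, Y_k)$ and $S \in \Lcl(Y_k, X)$. Setting $Y \coloneqq Y_k$, this $Y$ is automatically an intermediate space w.r.t. $\hat{Y}$: the composition $P \coloneqq S \circ R$ is a projection on $X_i + X_j$ with $S\vert_{Y_k}$ an isomorphism onto its image, so $Y_k \cap Y_k \hookrightarrow Y_k \hookrightarrow Y_i + Y_j$ follows from the embeddings $S(Y_k) \subseteq X_i + X_j$ and $R(X_i + X_j) \subseteq Y_i + Y_j$, exactly as in the discussion preceding Definition \ref{defn:comretract}.

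All the hypotheses of Theorem \ref{theo:Interpolationtheorem} are then in place: $X = F(\hat{X})$, $Y$ is intermediate w.r.t. $\hat{Y}$, $R \in \Lcl(\hat{X}, \hat{Y})$ with coretraction $S \in \Lcl(\hat{Y}, \hat{X})$, and $R \in \Lcl(X, Y)$, $S \in \Lcl(Y, X)$. The theorem's conclusion is $Y = F(\hat{Y})$, i.e.
\begin{align*}
F(Y_i, Y_j) = Y_k,
\end{align*}
with equality of sets and equivalence of norms, which is the desired assertion.

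I do not expect any genuine obstacle here, as the result is essentially a packaging of Example \ref{xmpl:besov} into the language of Definition \ref{defn:comretract}. The only point requiring a modicum of care is the verification that $Y_k$ is an \emph{intermediate} space w.r.t. $\hat{Y}$ rather than merely a Banach space mapping into $Y_i + Y_j$; but this is forced by the retract structure, since $S$ being an isomorphism onto a complemented subspace of the intermediate space $X = X_k$ transports the two required embeddings back to $Y_k$. One should also note that the choice of $k$ is canonical given $i, j$, so the statement $F(Y_i, Y_j) = Y_k$ is unambiguous once $F(X_i, X_j) = X_k$ is fixed.
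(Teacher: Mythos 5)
Your proposal is correct and follows exactly the route the paper intends: the paper's entire ``proof'' is the remark that the corollary follows from Theorem \ref{theo:Interpolationtheorem} exactly as in Example \ref{xmpl:besov}, and your argument is a faithful fleshing-out of that reduction, including the two points the paper leaves implicit (that restrictions of the single maps $R$, $S$ automatically agree on intersections, hence give morphisms of couples, and that $Y_k$ is intermediate w.r.t.\ $(Y_i, Y_j)$ via the chains $Y_i \cap Y_j \xrightarrow{S} X_i \cap X_j \hookrightarrow X_k \xrightarrow{R} Y_k$ and $Y_k \xrightarrow{S} X_k \hookrightarrow X_i + X_j \xrightarrow{R} Y_i + Y_j$ together with $R \circ S = \one$). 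The only blemish is the typo ``$Y_k \cap Y_k$'' where you mean $Y_i \cap Y_j$.
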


Corollary \ref{cor:interpolation} yields the identification of complex interpolation spaces of modulation spaces.

\begin{xmpl} \label{xmpl:modulation}
Consider $S: \mathcal{W} \to \mathcal{V}$ and $R: \mathcal{V} \to \mathcal{W}$,
where $\mathcal{W} = \Scl'$, $S f \coloneqq (\Box_k f)_{k \in \Z^d}$,
\begin{align}
\label{eqn:domain_retraction_modspaces}
\mathcal{V} = &\setp{(f_k)_{k \in \Z^d} \in (\Scl')^{\Z^d}}
{\sum_{k \in \Z^d} \sum_{l \in \Lambda} \Box_{k + l} f_k \text{ converges unconditionally in $S'$}}, \\
& R (f_k)_{k} \coloneqq \sum_{k \in \Z^d} \sum_{l \in \Lambda} \Box_{k + l} f_k.
\end{align}

Let $d \in \N$ and set $I \coloneqq [1, \infty] \times [1, \infty] \times \R$. For any $i = (p, q, s) \in I$ let $X_i \coloneqq \ell^{s,p}_{q}(\Z^d, L_{p}(\R^d)))$ and $Y_i \coloneqq M_{p, q}^{s}(\R^d)$. Notice that
\begin{align*}
\norm{f}_{Y_i} = \norm{(\norm{S f}_{L_p})_{k \in \Z^d}}_{\ell_q^{s,p}}.
\end{align*}
For $i = (p_0, q_0, s_0) \in I$, $j = (p_1, q_1, s_1) \in I$ and $\theta \in (0, 1)$, set $k = (p, q, s)$ according to Equation \eqref{eq:param}.

Recall, that a series with values in $\CN$ is unconditionally convergent if, and only if, it is absolutely convergent. Therefore the summability condition in Equation \eqref{eqn:domain_retraction_modspaces} is equivalent to the series
\begin{align*}
\sum_{k \in \Z^d} \sum_{l \in \Lambda} \Box_{k + l} f_k
\end{align*}
being unconditionally convergent in $\Scl'$. For every $f \in \Scl'$ we have
\begin{align*}
(R \circ S)(f) = \sum_{k \in \Z^d} \sum_{l \in \Lambda} \Box_{k + l} \Box_k f = \sum_{k \in \Z^d} \Box_k f = f,
\end{align*}
where we used that $\sum_{l \in \Lambda} \Box_{k + l} = 1$ on $B_{\sqrt{d}}(k)$. The operators $R$ and $S$ are even common retraction and corresponding coretraction w.r.t. the families $(X_i)_{i \in I}$ and $(Y_i)_{i \in I}$. We refer to, e.g., \cite[Lemma 2.5, Lemma 2.12, and Lemma 2.13]{chaichenets2018} for detailed calculations. Therefore, by Example \ref{xmpl:interpollqs}, we indeed have $[X_i, X_j]_\theta = X_k$. Hence, by Corollary \ref{cor:interpolation},
\begin{align*}
    [M_{p_0, q_0}^{s_0}(\R^d),M_{p_1, q_1}^{s_1}(\R^d)]_\theta = M_{p, q}^{s}(\R^d).
\end{align*}
\end{xmpl}

\section{Modulation spaces with exponential weights} \label{sec:modulexpweight}
Modulation spaces with exponentially growing weights were introduced by Wang, Zhao and Guo in \cite{wang2006}. They consist of rather well-behaved, in particular infinitely often differentiable, functions. The case of exponentially decaying weights was first considered in \cite{feichtinger2021}. The corresponding spaces are rather rough. In particular, they are no longer contained in $\Scl'$. Instead, they form are subset of ultradistributions $\Scl_1'$, namely of the dual of the Gelfand-Shilov space of Beurling type $\Scl_1$ (see {\cite[Section 3]{feichtinger2021}}). We refer to the textbook account \cite{gelfand1968} and \cite{petersson2023} for a thorough introduction. Note that our $\Scl_1$ is $\Sigma_1^1$ in the latter reference (see \cite[Theorem 3.7]{petersson2023}).

In order to define $\Scl_1'$, we consider for all $\lambda \in \R$ and all $f \in C^\infty(\R^d)$ the semi-norms
\begin{align}
p_\lambda(f) \coloneqq \sup_{x \in \R^d} e^{\lambda \abs{x}} \abs{f(x)}, & &
q_\lambda(f) \coloneqq \sup_{\xi \in \R^d}  e^{\lambda \abs{\xi}} \abs{\Fcl f(\xi)}. \label{eq:seminorms}
\end{align}
Then
\begin{align}
\Scl_1(\R^d) \coloneqq \setp{f \in C^\infty(\R^d)}{\forall \lambda \in \R: p_\lambda(f) + q_\lambda(f) < \infty} 
\end{align}
 is a locally convex topological vector space. Observe that, e.g.,
 \begin{align}
 \label{eqn:gauss}
 x \mapsto g(x) \coloneqq \frac{1}{(\sqrt{2\pi})^d} e^{-\frac{x^2}{2}} = (\Fcl g)(x) \in \Scl_1.
 \end{align}
Observe that, by \cite[Definition 2.1]{petersson2023}, $\Scl_1$ consists of precisely those $f \in C^\infty(\R^d)$ which satisfy
 \begin{align*}
    \forall h > 0 \ \exists C_h \in \R \ \forall \alpha, \beta \in \N_0^n \colon \sup_{x \in \R^n} \abs{x^\alpha D^\beta f(x)} \leq C_h h^{\abs{\alpha + \beta}} \alpha! \beta!.
 \end{align*}
 Hence, any $f \in \Scl_1(\R^d)$ is analytic (see \cite[Preface of Section 2.2, page 172]{gelfand1968}). By Liouville’s Theorem, functions of bounded support in $\Scl_1$ must be constant and from Equation \eqref{eq:seminorms} we infer that only $f = 0$ satisfies this criterion.
 
The dual space $\Scl_1'(\R^d)$ is equipped with the weak-*-topology and the operations of translation, modulation, multiplication with functions from $\Scl_1$ and the Fourier transform are defined, as usually, via duality. For example $\Fcl(f)(\ph) \coloneqq f(\Fcl \ph)$ for all $f \in \Scl_1'$ and all $\ph \in \Scl_1$.

Notice, that $\Scl_1'$ is too big to extend the coretraction $\tilde{S}: \Scl' \to \Scl'$, $f \mapsto (\Box_k f)_k$ to. More precisely, suppose $\Box_k f \in \Scl_1'$ is defined for an $f \in \Scl'_1(\R^d)$ and $k \in \Z^d$. Then, we can apply it to the Gaussian $\ph \in \Scl_1$ from \eqref{eqn:gauss} and obtain
\begin{align*}
\sk{\Box_k f}{g} & = \sk{\Fcl^{-1} \sigma_k \Fcl f}{g} = \sk{\Fcl f}{\sigma_k \Fcl^{-1} g} = \sk{\Fcl f}{\sigma_k g}.
\end{align*}
As $\sigma_k$ has compact support, the same is true for $\sigma_k g \overset{!}{\in}{\Scl_1}$ and thus, by the above, $\sigma_k g \equiv 0$, which is clearly a contradiction.

To meaningfully extend the corectraction $\tilde{S}$, we rather start with the retraction $R$. A natural domain of $(f_k) \mapsto R (f_k) = \sum_{k \in \Z^d} \sum_{l \in \Lambda} \Box_{k + l} f_k$, is
\begin{align}
\label{eqn:domain_retraction}
\mathcal{V} & \coloneqq \setp{(f_k) \in (\Scl')^{\Z^d}}{\sum_{k \in \Z^d} \sum_{l \in \Lambda} \Box_{k + l} f_k \text{ converges unconditionally in } \Scl_1'},
\end{align}
as $f_k$ shall be in the domain of the known $\Box_k: \Scl' \to \Scl'$, $k \in \Z^d$, and the series defining $R$ shall converge unconditionally in $\Scl_1'$. For the domain of the coretraction $S$ we generalize the idea from \cite[Equation (5.3)]{feichtinger2021} and use
\begin{align}
\label{eqn:domain_coretraction}
\mathcal{W} & \coloneqq
\Biggl\{f \in \Scl_1' \, \Big| \, \exists (f_l)_{l \in \Z^d} \subseteq \Scl': 
\forall l \in \Z^d: \\
\nonumber
\supp(\Fcl f_l) & \subseteq \overline{B}_{3\sqrt{d}}(l), 
f  = \sum_{l \in \Z^d} f_l \text{ unconditionally in $\Scl_1'$} \Biggr\}.
\end{align}

\begin{prop}
\label{prop:coretraction}
Let $d \in \N$. Then for all $f = \sum_{l \in \Z^d} f_l \in \mathcal{W}$ as in \eqref{eqn:domain_coretraction} the formula
\begin{align}
\label{eqn:box_on_espq}
S f & \coloneqq \Big(\sum_{l \in \Z^d} \Box_k f_l\Big)_{k \in \Z^d}
\end{align}
leads to a well-defined mapping $S: \mathcal{W} \to \mathcal{V}$ and $S$ extends $\tilde{S}$, i.e. $(S f)(k) = \Box_k f$ for any $f \in \Scl'$ and any $k \in \Z^d$.
\end{prop}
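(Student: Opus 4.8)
The plan is to establish three facts: that each component of $Sf$ is a finite sum and hence lies in $\Scl'$; that the resulting sequence is independent of the chosen decomposition $f = \sum_l f_l$ (so that $S$ is genuinely a map on $\mathcal{W}$); and that $Sf \in \mathcal{V}$ while $S$ extends $\tilde{S}$. The first and the last are routine, the middle one is the crux.

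First I would record the support bookkeeping. Since $\supp \Fcl f_l \subseteq \overline{B}_{3\sqrt{d}}(l)$ and $\supp \sigma_k \subseteq \overline{B}_{\sqrt{d}}(k)$, the product $\sigma_k \Fcl f_l$ vanishes unless $\abs{k - l} \le 4\sqrt{d}$; hence $\Box_k f_l = 0$ for all but the finitely many $l$ in $L_k \coloneqq \set{l \in \Z^d : \abs{k-l} \le 4\sqrt{d}}$, and $(Sf)_k = \sum_{l \in L_k} \Box_k f_l \in \Scl'$ is a finite sum. For the extension claim I take $f \in \Scl'$ with its canonical decomposition $f_l = \Box_l f$ (whose frequency support lies in $\overline{B}_{\sqrt{d}}(l) \subseteq \overline{B}_{3\sqrt{d}}(l)$); using $\sum_{l} \sigma_l \equiv 1$ one gets $(Sf)_k = \sum_l \Box_k \Box_l f = \Box_k f$, i.e. $Sf = \tilde{S} f$. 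That $Sf \in \mathcal{V}$ and indeed $R \circ S = \one$ I would obtain from the identity $\sum_{l' \in \Lambda} \Box_{k + l'} \Box_k = \Box_k$ (a consequence of $\sum_{l' \in \Lambda} \sigma_{k+l'} \equiv 1$ on $\supp \sigma_k$): this collapses $\sum_k \sum_{l' \in \Lambda} \Box_{k+l'} (Sf)_k$ to $\sum_k \sum_l \Box_k f_l = \sum_l f_l = f$, the reordering of the (unconditionally convergent) double series being justified in $\Scl_1'$.

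The main obstacle is well-definedness. Writing $h_l = f_l - g_l$ for two admissible decompositions of the same $f$, I must show $\sum_{l \in L_k} \Box_k h_l = 0$ in $\Scl'$ whenever $\sum_l h_l = 0$ in $\Scl_1'$ and $\supp \Fcl h_l \subseteq \overline{B}_{3\sqrt{d}}(l)$. Testing against $\psi \in \Scl$ and moving $\Box_k$ onto the test function transfers this to showing $\sigma_k \big(\sum_l \Fcl h_l\big) = 0$, i.e. that the locally finite sum $H \coloneqq \sum_l \Fcl h_l \in \mathcal{D}'$ vanishes near $\overline{B}_{\sqrt{d}}(k)$. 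The difficulty is exactly the phenomenon highlighted before the proposition: convergence to $0$ in $\Scl_1'$ tests only against the analytic, super-exponentially decaying functions of $\Scl_1$, none of which is compactly supported, so it does not directly control the pairing of $H$ with a $w \in C_c^\infty$.

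To overcome this I would mollify. For $w \in C_c^\infty$ and the normalised Gaussian $G_\varepsilon(x) = (\pi\varepsilon)^{-d/2} e^{-\abs{x}^2/\varepsilon}$ the convolution $\ph_\varepsilon \coloneqq w * G_\varepsilon$ lies in $\Scl_1$ (its Fourier transform is $\Fcl w \cdot \Fcl G_\varepsilon$, which decays super-exponentially, while $\ph_\varepsilon$ itself inherits the super-exponential decay of $G_\varepsilon$), and $\ph_\varepsilon \to w$ in $C^\infty$ as $\varepsilon \to 0$. Applying the hypothesis to $\ph_\varepsilon$ gives $\sum_l \sk{\Fcl h_l}{\ph_\varepsilon} = 0$; splitting off the finitely many $l$ whose support meets $\supp w$, the near terms converge to $\sk{H}{w}$, whereas the far terms are estimated by the super-exponential smallness of $\ph_\varepsilon$ on $\overline{B}_{3\sqrt{d}}(l)$ against the size of $\Fcl h_l$. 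The needed uniform bound on the order and the mass of $\Fcl h_l$, growing at most like $e^{\lambda\abs{l}}$, follows from Banach--Steinhaus applied to the $\Scl_1'$-convergent partial sums, and since a Gaussian factor $e^{-c\abs{l}^2/\varepsilon}$ beats $e^{\lambda \abs{l}}$ for every fixed $\varepsilon$, the far sum tends to $0$; letting $\varepsilon \to 0$ yields $\sk{H}{w} = 0$. This is the step I expect to require the most care, precisely because it must exploit the localisation of the $h_l$ along the lattice in order to convert $\Scl_1'$-information into $\mathcal{D}'$-information.
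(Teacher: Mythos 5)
Your overall skeleton --- the support count giving the finite sum over $\Lambda'$, the extension property for $f \in \Scl'$, and the collapse identity yielding $Sf \in \mathcal{V}$ and $R \circ S = \one$ --- matches the paper's proof, and you have correctly isolated the genuinely delicate point: well-definedness requires upgrading the information ``$\sum_l h_l = 0$ in $\Scl_1'$'' to a statement that can be tested against compactly supported functions, which is nontrivial precisely because $\Scl_1$ contains no nonzero function of compact support. (The paper itself treats this step very tersely: it simply asserts the Fourier-support inclusion for the near sum $\sum_{l \in \Lambda'} h_{k+l} = -\sum_{l \notin \Lambda'} h_{k+l}$ and pairs against $\sigma_k \Fcl \varphi$.) So your mollification strategy is aimed at the right target.

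However, the decisive step of your argument has a genuine gap: the claim that Banach--Steinhaus applied to the $\Scl_1'$-convergent partial sums yields a \emph{uniform order} and a \emph{mass bound} $\lesssim e^{\lambda \abs{l}}$ for the compactly supported distributions $\Fcl h_l$. Banach--Steinhaus on the Fr\'echet space $\Scl_1$ gives equicontinuity with respect to the seminorms in \eqref{eq:seminorms}, i.e. constants $C, \lambda$ with $\abs{\langle \Fcl h_l, \ph\rangle} \leq C\kl{p_\lambda(\ph) + q_\lambda(\ph)}$ for all $\ph \in \Scl_1$ and all $l$; but ``order'' and ``mass'' quantify the action of $\Fcl h_l$ on smooth functions supported near $\overline{B}_{3\sqrt{d}}(l)$, i.e. its $\mathcal{E}'$-structure, and no such function lies in $\Scl_1$. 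The two kinds of bounds cannot be interconverted: transferring the $\Scl_1'$-bound to a cutoff by mollifying fails because $q_\lambda(\chi * G_{\delta'}) \to \infty$ as $\delta' \to 0$ (the Fourier transform of a non-analytic compactly supported function never decays exponentially). In fact the implication you invoke is false: by Cauchy's estimates on polydiscs (every $\ph \in \Scl_1$ is entire with $\sup_{\abs{\Im z} \leq \sqrt{d}} \abs{\ph(z)} \lesssim q_{\sqrt{d}+1}(\ph)$), the family $v_\alpha \coloneqq \frac{1}{\alpha!} D^\alpha \delta_{x_0}$ satisfies $\abs{\langle v_\alpha, \ph \rangle} \lesssim q_{\sqrt{d}+1}(\ph)$ uniformly in the multi-index $\alpha$, so it is bounded in $\Scl_1'$ while its orders are unbounded and its masses uncontrollable. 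Since the definition of $\mathcal{W}$ in \eqref{eqn:domain_coretraction} imposes nothing on the $f_l$ beyond membership in $\Scl'$, the support condition, and $\Scl_1'$-convergence, pieces of exactly this type are admissible among the $h_l$; for them your far-term estimate breaks down, because pairing a high-order derivative of a point mass with $\ph_\varepsilon = w * G_\varepsilon$ produces factors $\sup \abs{D^\alpha \ph_\varepsilon} \sim \varepsilon^{-\abs{\alpha}/2}\sqrt{\alpha!}$ that, for unbounded $\abs{\alpha}$, can defeat the Gaussian gain $e^{-c \operatorname{dist}^2/\varepsilon}$ at fixed $\varepsilon$. Hence the interchange of $\varepsilon \to 0$ with the infinite far sum --- the only hard step --- is not justified. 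This does not show the proposition is false; it shows that the key estimate of your proof is not supported by the stated hypotheses, and the difficulty you correctly identified remains open in your write-up.
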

\begin{proof}
Consider any $f \in \mathcal{W}$ with $\sum_{l \in \Z^d} f_l$ converging unconditionally in $\Scl_1'$ to $f$, $f_l \in \Scl'$, $\supp(\FT f_l) \subseteq \overline{B}_{3 \sqrt{d}}(l)$ for all $l \in \Z^d$. Then the series in \eqref{eqn:box_on_espq} is, due to the support condition on $\FT f_l$, just a finite sum in $\Scl'$,
\begin{align}
\label{eqn:coretraction_extension}
(S f)(k) & = \sum_{l \in \Lambda'} \Box_k f_{k + l}
\end{align}
for all $k \in \Z^d$, where, similarly to Equation \eqref{eqn:zero_neighbours}, we set
\begin{align*}
\Lambda' \coloneqq \setp{l \in \Z^d}{\abs{l} \leq 4\sqrt{d}}.
\end{align*}

Next we show that $S f$ is well-defined. To that end, consider another representation $f = \sum_{l \in \Z^d} \tilde{f}_l \in \Scl_1'$, where $\tilde{f}_l \in \Scl'$ with $\supp(\FT \tilde{f}_l) \subseteq B_{3 \sqrt{d}}(l)$ for all $l \in \Z^d$ and define $(h_l) \coloneqq (f_l - \tilde{f}_l)$. To show is
\begin{align*}
\sum_{l \in \Lambda'} \Box_k h_{k + l} & = 0
\end{align*}
for all $k \in \Z^d$. Fix any $\varphi \in \Scl$ and any $k \in \Z^d$. Due to the bounded supports of $\Fcl h_l$, one has
\begin{align*}
\supp\Biggl(\sum_{l \in \Lambda'} \Fcl^{-1} h_{k + l}\Biggr) \subseteq \R^d \setminus B_{\sqrt{d}}(k)
\end{align*}
and therefore, indeed
\begin{align*}
\sk{\sum_{l \in \Lambda'} \Box_k h_{k + l}}{\varphi}_{\Scl' \times S} &=
\sum_{l \in \Lambda'} 
\sk{\Fcl^{-1} h_{k + l}}{\sigma_k \Fcl \varphi}_{\Scl' \times S} = 0.
\end{align*}

We prove that $S$ is an extension of $\tilde{S}$. Observe that $\Scl' \subseteq \mathcal{W}$. To that end, consider any $f \in \Scl'$ and set $(f_k)_{k \in \Z^d} \coloneqq (\Box_k f)_{k \in \Z^d}$. Clearly, $Sf = (\Box_k f)_k$ by Equation \eqref{eqn:coretraction_extension}.

In remains to show that $Sf \in \mathcal{V}$ for every $f \in \mathcal{W}$. Indeed, due to the property of bounded supports, we have
\begin{equation}
\begin{aligned}
\label{eqn:retraction_coretration_property}
\sum_{k \in \Z^d} \sum_{l \in \Lambda} \Box_{k + l} (S f)(k) & =
\sum_{k \in \Z^d} \sum_{l \in \Lambda} \Box_{k + l} \Box_k \sum_{m \in \Lambda'} f_{k + m} \\
&= \sum_{k \in \Z^d} \sum_{m \in \Lambda'} \Box_k f_{k + m} \\
&= \sum_{k' \in \Z^d} \sum_{m' \in \Lambda'} \Box_{k' + m'} f_{k'} = \sum_{k' \in \Z^d} f_{k'} = f
\end{aligned}
\end{equation}
unconditionally in $\Scl_1'$.

This concludes the proof.
\end{proof}

The respective retraction $R$ is treated in the following Proposition.
\begin{prop}
\label{prop:retraction}
Let $d \in \N$ and $\mathcal{V}$, $\mathcal{W}$ as in Equation \eqref{eqn:domain_coretraction} and \eqref{eqn:domain_retraction}, respectively, and $S$ as in Proposition \ref{prop:coretraction}. Then the series
\begin{align}
\label{eqn:retraction}
R (f_k)_{k \in \Z^d} & \coloneqq \sum_{k \in \Z^d} \sum_{l \in \Lambda} \Box_{k + l} f_k,
\end{align}
converges unconditionally in $\Scl_1'$ and defines a mapping $R: \mathcal{V} \to \mathcal{W}$. Moreover, $R \circ S = \one_\mathcal{W}$, i.e. $R$ is a retraction with corresponding coretraction $S$.
\end{prop}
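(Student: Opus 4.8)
The plan is to split the statement into its three assertions and handle them in order: unconditional convergence of the defining series of $R$ on $\mathcal{V}$, the fact that $R$ maps $\mathcal{V}$ into $\mathcal{W}$, and the identity $R \circ S = \one_\mathcal{W}$. The first is immediate and carries no content: for $(f_k)_{k \in \Z^d} \in \mathcal{V}$ the series $\sum_{k \in \Z^d}\sum_{l \in \Lambda}\Box_{k+l}f_k$ converges unconditionally in $\Scl_1'$ by the very definition \eqref{eqn:domain_retraction} of $\mathcal{V}$, and since $\Scl_1'$ is Hausdorff the limit $R(f_k)_{k}$ is a well-defined element of $\Scl_1'$. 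The real work therefore lies in the second and third assertions, and I expect the regrouping argument behind the second to be the only genuine obstacle.

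To see that $R$ maps into $\mathcal{W}$, I would regroup the unconditionally convergent double series by the value of the frequency index $m \coloneqq k + l$. Since $\Lambda$ is finite, the index set $\setp{(k, l)}{k \in \Z^d, \ l \in \Lambda}$ partitions into the finite blocks $B_m \coloneqq \setp{(m - l, l)}{l \in \Lambda}$, and associativity of unordered summability along a partition into finite blocks yields
\begin{align*}
R(f_k)_{k} = \sum_{m \in \Z^d} g_m, \qquad g_m \coloneqq \sum_{l \in \Lambda}\Box_m f_{m - l} \in \Scl',
\end{align*}
with unconditional convergence in $\Scl_1'$. Because $\Box_m = \Fcl^{-1}\sigma_m \Fcl$ with $\supp(\sigma_m) \subseteq \overline{B}_{\sqrt{d}}(m)$, one has $\Fcl g_m = \sigma_m \sum_{l \in \Lambda}\Fcl f_{m - l}$ and hence $\supp(\Fcl g_m) \subseteq \overline{B}_{\sqrt{d}}(m) \subseteq \overline{B}_{3\sqrt{d}}(m)$. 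The family $(g_m)_{m \in \Z^d} \subseteq \Scl'$ thus witnesses $R(f_k)_{k} \in \mathcal{W}$ in the sense of \eqref{eqn:domain_coretraction}.

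For $R \circ S = \one_\mathcal{W}$ essentially no new computation is needed, as the key identity was already produced in the previous proof. Given $f \in \mathcal{W}$ with a representation $f = \sum_{l \in \Z^d} f_l$ as in \eqref{eqn:domain_coretraction}, Proposition \ref{prop:coretraction} establishes $S f \in \mathcal{V}$, so $R(S f)$ is defined, and the chain \eqref{eqn:retraction_coretration_property} computes precisely
\begin{align*}
R(Sf) = \sum_{k \in \Z^d}\sum_{l \in \Lambda}\Box_{k+l}(Sf)(k) = \sum_{k' \in \Z^d} f_{k'} = f
\end{align*}
unconditionally in $\Scl_1'$. Since $Sf$ is independent of the chosen representation by Proposition \ref{prop:coretraction}, this gives $R(Sf) = f$ for every $f \in \mathcal{W}$, as required.

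The delicate point, then, is the associativity/regrouping step in the second paragraph: one must confirm that passing from summation over $\Z^d \times \Lambda$ to summation over the single index $m = k + l$ preserves both unconditional convergence and the value of the limit. I expect this to be routine precisely because $\Lambda$ is finite, so each block $B_m$ is finite and each block sum $g_m$ is a finite sum living in $\Scl'$; this reduces the claim to the standard fact that a summable family in a Hausdorff topological vector space may be summed blockwise along any partition into finite sets, which needs no completeness of $\Scl_1'$. If one instead interprets ``unconditional convergence'' as convergence of every reordering, the same finiteness of $\Lambda$ lets one interleave the reorderings of the inner and outer sums, so the argument goes through verbatim.
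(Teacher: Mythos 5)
Your proposal is correct and follows essentially the same route as the paper: the paper's own proof simply declares the unconditional convergence and the inclusion $R(\mathcal{V}) \subseteq \mathcal{W}$ to be clear from the definitions \eqref{eqn:domain_retraction} and \eqref{eqn:domain_coretraction}, and cites Equation \eqref{eqn:retraction_coretration_property} for $R \circ S = \one_{\mathcal{W}}$, exactly as you do. Your finite-block regrouping argument (by $m = k + l$; grouping by $k$ instead, which gives witnesses $h_k = \sum_{l \in \Lambda} \Box_{k+l} f_k$ with $\supp(\Fcl h_k) \subseteq \overline{B}_{3\sqrt{d}}(k)$, is equally valid and explains the radius $3\sqrt{d}$ in \eqref{eqn:domain_coretraction}) merely spells out the step the paper leaves implicit.
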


\begin{proof}
The unconditional convergence of the series in \eqref{eqn:retraction} and the fact that $R: \mathcal{V} \to \mathcal{W}$ are clear from Equations \eqref{eqn:domain_coretraction} and \eqref{eqn:domain_retraction}. The property $R \circ S = \one_\mathcal{W}$ is proven in Equation
\eqref{eqn:retraction_coretration_property}.
\end{proof}


We are now in the position to define modulation spaces with exponential weight.
\begin{defn}
\label{defn:modspace}
Let $d \in \N$, $s \in \R$, $p, q \in [1,\infty]$. We define the \emph{modulation space with exponential weight} by
\begin{align}
\label{eqn:exp_modspace}
E^s_{p,q}(\R^d) \coloneqq \set{f \in \mathcal{W} \mid \norm{f}_{E_{p,q}^s} < \infty},
\end{align}
where
\begin{align}
\label{eqn:exp_modspace_norm}
\norm{f}_{E_{p,q}^s} \coloneqq \norm{(2^{s\abs{k}} \norm{\Box_{k} f}_{L^p})_{k \in \Z^d}}_{\ell_q}.
\end{align}
\end{defn}

\begin{rem}
Note that our Definition \ref{defn:modspace} is equivalent to \cite[Subsection 5.1]{feichtinger2021} by their Proposition 5.4. Therefore a different choice of the partition of unity $(\sigma_k)$ will yield the same space \eqref{eqn:exp_modspace} and a norm equivalent to \eqref{eqn:exp_modspace_norm}. Moreover, $E_{p, q}^s(\R^d)$ is a Banach space.
\end{rem}

The following lemma is frequently used, when dealing with modulation type spaces.
\begin{lem}[Bernstein Multiplier Estimate, cf. {\cite[Proposition 1.9]{wang2007}}] \label{lem:bernstein}
    Let $d \in \N$ and $p_1, p_2 \in [1,\infty]$ satisfy $p_1 \leq p_2$. Then, for every $f \in \Scl'(\R^d)$
    \begin{align*}
        \norm{\Box_k f}_{{p_2}} \lesssim_d \norm{\Box_k f}_{{p_1}},
    \end{align*}
    with the implicit constant independent of $p_1$ and $p_2$.
\end{lem}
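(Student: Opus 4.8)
The plan is to exploit that $\Box_k f$ is band-limited: its Fourier transform $\sigma_k \Fcl f$ is a compactly supported distribution concentrated in the fixed-radius ball $\overline{B}_{\sqrt{d}}(k)$, so (by Paley--Wiener--Schwartz) $\Box_k f$ is a smooth function of at most polynomial growth and all of the objects below are well-defined. For such functions the passage from $L_{p_1}$ to $L_{p_2}$ is a Nikolskii-type estimate, which I would obtain from a reproducing-kernel identity together with Young's convolution inequality. If $\norm{\Box_k f}_{p_1} = \infty$ the asserted inequality is trivial by the convention on generalized norms, so one may assume $\Box_k f \in L_{p_1}$.

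Concretely, first I would fix a cut-off $\chi \in C_c^\infty(\R^d)$ with $\chi \equiv 1$ on $\overline{B}_{\sqrt{d}}(0)$ and set $\psi \coloneqq \Fcl^{-1}\chi \in \Scl(\R^d)$. Since $\supp(\Fcl \Box_k f) \subseteq \overline{B}_{\sqrt{d}}(k)$ and $\chi(\cdot - k) \equiv 1$ there, one has $\chi(\cdot - k)\,\Fcl\Box_k f = \Fcl\Box_k f$, whence by the convolution theorem (with the symmetric normalization of the excerpt)
\begin{align*}
\Box_k f & = (2\pi)^{-d/2}\,\Fcl^{-1}\bigl(\chi(\cdot - k)\bigr) * \Box_k f.
\end{align*}
The modulation identity $\Fcl^{-1}(\chi(\cdot - k))(x) = e^{\iu\scp{x}{k}}\psi(x)$ shows $\abs{\Fcl^{-1}(\chi(\cdot - k))} = \abs{\psi}$, so the reproducing kernel has the \emph{same} $L_r$ norm as $\psi$ for every $k$ and every $r \in [1, \infty]$.

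Next I would apply Young's inequality. For $p_1 \leq p_2$ the exponent $r$ determined by $\frac{1}{r} = 1 + \frac{1}{p_2} - \frac{1}{p_1}$ lies in $[1, \infty]$ (the requirement $r \geq 1$ is exactly $p_1 \leq p_2$, and $\frac{1}{r} > 0$ since $\frac{1}{p_1} \leq 1$), and one obtains
\begin{align*}
\norm{\Box_k f}_{p_2} & \leq (2\pi)^{-d/2}\,\norm{\psi}_r\,\norm{\Box_k f}_{p_1}.
\end{align*}

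The main obstacle is making the constant independent of $p_1$ and $p_2$: the Young exponent $r$ varies with the pair $(p_1, p_2)$, so I need $\norm{\psi}_r$ bounded uniformly in $r \in [1, \infty]$. This follows from the log-convexity (interpolation) of $L_r$ norms, namely $\norm{\psi}_r \leq \norm{\psi}_1^{1/r}\norm{\psi}_\infty^{1 - 1/r} \leq \max\set{\norm{\psi}_1, \norm{\psi}_\infty}$, and both endpoints are finite because $\psi \in \Scl(\R^d)$. Setting $C_d \coloneqq (2\pi)^{-d/2}\max\set{\norm{\psi}_1, \norm{\psi}_\infty}$ yields $\norm{\Box_k f}_{p_2} \leq C_d \norm{\Box_k f}_{p_1}$ with $C_d$ depending only on $d$ through the fixed choice of $\chi$, and in particular independent of $p_1$, $p_2$ and $k$, which is precisely the claimed estimate.
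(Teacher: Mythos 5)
Your proof is correct: the reproducing-kernel identity, Young's inequality with the exponent $\frac{1}{r} = 1 + \frac{1}{p_2} - \frac{1}{p_1} \in [0,1]$, the modulation identity removing the $k$-dependence, and the log-convexity bound $\norm{\psi}_r \leq \max\set{\norm{\psi}_1, \norm{\psi}_\infty}$ making the constant uniform in $(p_1, p_2)$ are all sound. The paper itself offers no proof of Lemma \ref{lem:bernstein}, deferring instead to \cite[Proposition 1.9]{wang2007}, and your argument is precisely the standard Nikolskii-type proof given in that reference, so the two approaches coincide.
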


As in the case of the usual modulation spaces one has the following statements.
\begin{prop} \label{prop:embS1}
Let $d \in \N$, $s_0, s_1, \in \R$, $p, q_0, q_1 \in [1,\infty]$ such that $\Scl_1 \leq s_0, q_0 \leq q_1$. Then 
\begin{align}
\label{eq:monoton}
\Scl_1 & \hookrightarrow E^{s_0}_{p, q_0} \hookrightarrow E^{s_1}_{p,q_1} \hookrightarrow \Scl_1'. 
\end{align} 
\end{prop}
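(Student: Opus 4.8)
Reading the hypotheses as $s_1 \le s_0$ and $q_0 \le q_1$ (so that the weight decreases and the summability exponent increases along the chain), I would treat the three inclusions separately, the middle one being immediate and the two outer ones resting on a single decay estimate. For the middle inclusion $E^{s_0}_{p,q_0}\hookrightarrow E^{s_1}_{p,q_1}$ I would simply compare norms: since $\abs{k}\ge 0$ and $s_1\le s_0$ one has $2^{s_1\abs{k}}\le 2^{s_0\abs{k}}$, and since $q_0\le q_1$ the elementary embedding $\ell_{q_0}\hookrightarrow\ell_{q_1}$ holds with constant $1$. Chaining these two monotonicities in \eqref{eqn:exp_modspace_norm} gives $\norm{f}_{E^{s_1}_{p,q_1}}\le\norm{f}_{E^{s_0}_{p,q_0}}$, the asserted continuous inclusion.

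The key technical ingredient for the two outer inclusions is the claim that for every $g\in\Scl_1$ and every $N\in\N$ one has $\norm{\Box_k g}_{L^p}\lesssim_N 2^{-N\abs{k}}$, uniformly in $p\in[1,\infty]$ and $k\in\Z^d$, with the implicit constant controlled by finitely many of the seminorms $p_\lambda,q_\lambda$. To prove it I would first invoke Lemma \ref{lem:bernstein} with $p_1=1$, $p_2=p$ to reduce to $\norm{\Box_k g}_{L^1}$, the constant being uniform in $k$ since modulating shifts the Fourier support of $\Box_k g$ to a ball at the origin without changing any $L^p$ norm. Writing $\Fcl(\Box_k g)=\sigma_k\Fcl g$, supported in $\overline{B}_{\sqrt d}(k)$, integration by parts yields $\norm{\Box_k g}_{L^1}\lesssim\sum_{\abs{\alpha}\le d+1}\norm{D^\alpha(\sigma_k\Fcl g)}_{L^1}$. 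The derivatives of $\sigma_k$ are bounded uniformly in $k$, so by the Leibniz rule this is dominated by $\max_{\abs{\beta}\le d+1}\sup_{\abs{\xi}\ge\abs{k}-\sqrt d}\abs{D^\beta\Fcl g(\xi)}$ times the volume of $\supp\sigma_k$. Finally, since the $\Sigma_1^1$-characterisation of $\Scl_1$ is symmetric in $g$ and $\Fcl g$, it shows that $\Fcl g$ and all its derivatives decay faster than $e^{-\lambda\abs{\xi}}$ for every $\lambda$; taking $\lambda$ large relative to $N$ gives the claim.

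Granting this estimate, $\Scl_1\hookrightarrow E^{s_0}_{p,q_0}$ follows by choosing $N$ with $2^{(s_0-N)\abs{k}}$ summable, so that $\norm{g}_{E^{s_0}_{p,q_0}}^{q_0}\lesssim\sum_{k}2^{(s_0-N)q_0\abs{k}}<\infty$; the bound through finitely many seminorms gives continuity. For $E^{s_1}_{p,q_1}\hookrightarrow\Scl_1'$ the inclusion is set-theoretic by Definition \ref{defn:modspace}, and by Propositions \ref{prop:coretraction} and \ref{prop:retraction} every $f\in E^{s_1}_{p,q_1}$ satisfies $f=\sum_k\Box_k f$ unconditionally in $\Scl_1'$. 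Fixing $\varphi\in\Scl_1$ and a fattened cutoff $\psi_k=\psi_0(\cdot-k)$ equal to $1$ on $\supp\sigma_k$, I would write $\langle\Box_k f,\varphi\rangle=\langle\Box_k f,\Fcl^{-1}\psi_k\Fcl\varphi\rangle$ and apply Hölder in space and then in $\ell_{q_1}\times\ell_{q_1'}$ to obtain $\abs{\langle f,\varphi\rangle}\le\norm{f}_{E^{s_1}_{p,q_1}}\,\big\|(2^{-s_1\abs{k}}\norm{\Fcl^{-1}\psi_k\Fcl\varphi}_{L^{p'}})_k\big\|_{\ell_{q_1'}}$; the second factor is finite by the decay estimate applied to $\varphi$, yielding weak-$*$ continuity.

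The main obstacle is the decay estimate itself — in particular arranging all constants to be uniform in $p$ and in the lattice point $k$, and tracking their dependence on the $\Scl_1$-seminorms — since everything else reduces to Hölder's inequality and the two elementary monotonicities.
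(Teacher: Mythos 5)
Your proposal is correct, but it necessarily takes a different route from the paper: the paper's entire proof of Proposition \ref{prop:embS1} is the single line ``We refer to \cite[Subsection 5.1]{feichtinger2021}'', so no argument is given there at all, whereas you supply a self-contained one. Your reading of the (typographically garbled) hypothesis as $s_1 \leq s_0$, $q_0 \leq q_1$ is the intended one, and your three-step structure is the natural in-house argument: the middle embedding follows from the pointwise monotonicity $2^{s_1\abs{k}} \leq 2^{s_0\abs{k}}$ together with $\ell_{q_0} \hookrightarrow \ell_{q_1}$; both outer embeddings hinge on the super-exponential decay $\norm{\Box_k \varphi}_{L^p} \lesssim_N 2^{-N\abs{k}}$ for $\varphi \in \Scl_1$, which you obtain from Lemma \ref{lem:bernstein} (this is what gives uniformity in $p$), the standard bound $\norm{\Fcl^{-1}h}_{L^1} \lesssim \sum_{\abs{\alpha} \leq d+1} \norm{D^\alpha h}_{L^1}$, and the Gelfand--Shilov derivative estimates; and the embedding into $\Scl_1'$ is the duality/H\"older argument using $f = \sum_k \Box_k f$ unconditionally in $\Scl_1'$, which is legitimately available from Propositions \ref{prop:coretraction} and \ref{prop:retraction}. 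What your route buys is a proof entirely inside the paper's own framework ($\mathcal{W}$, $S$, $R$) instead of an external citation; this is arguably what the paper should have written. The one point you must make precise is the continuity claim for $\Scl_1 \hookrightarrow E^{s_0}_{p,q_0}$: your constants are controlled by seminorms of the form $\sup_\xi e^{\lambda\abs{\xi}} \abs{D^\beta \Fcl g(\xi)}$, which are \emph{not} among the paper's defining seminorms $p_\lambda, q_\lambda$ of Equation \eqref{eq:seminorms}, so asserting that the bound runs ``through finitely many of the seminorms $p_\lambda, q_\lambda$'' is not what your computation actually delivers. To conclude continuity with respect to the paper's topology you need the identification $\Scl_1 = \Sigma_1^1$ of \cite[Theorem 3.7]{petersson2023} to be topological (equivalence of the two seminorm families), not merely set-theoretic; once that is invoked explicitly, derivative-type seminorms are continuous on $\Scl_1$ and your argument is complete.
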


\begin{proof}
We refer to \cite[Subsection 5.1]{feichtinger2021}.
\end{proof}

We also have the previously unnoticed embedding $E_{p_1, q}^s \hookrightarrow E_{p_2, q}$ for $p_1 \leq p_2$ from Theorem \ref{thm:exp_modspace_p_embedding}.
\begin{proof}[Proof of Theorem \ref{thm:exp_modspace_p_embedding}]
The proof follows exactly as in the usual modulation space setting, i.e. by envoking Lemma \ref{lem:bernstein}.
\end{proof}

\begin{rem} \label{rem:bernstein}
The proof of Theorem \ref{thm:exp_modspace_p_embedding} makes use of the domain of the common coretration $\mathcal{W}$. In the situation of \cite[Equation (5.3)]{feichtinger2021} Bernstein’s multiplier estimate could not be applied immediately, due to the sequence $(f_k)$ being dependent on $p$.
\end{rem}

\begin{proof}[Proof of Theorem \ref{theo:complexinterpolationAspq}]
Let $R$ and $S$ be as in Proposition \ref{prop:coretraction} and \ref{prop:retraction} respectively. Set $I \coloneqq [1,\infty] \times [1,\infty] \times \R $ and let, for $i = (\tilde{p}, \tilde{q}, \tilde{s}) \in I$, $X_i \coloneqq \ell^{\tilde{s},e}_{\tilde{q}}(\Z^d,L^{\tilde{p}}(\R^d))$ and $Y_i \coloneqq E^{\tilde{s}}_{\tilde{p},\tilde{q}}$. The claim follows by Corollary \ref{cor:interpolation}, if $R$ defined above is a common retraction with corresponding coretraction $S$, w.r.t. the families $(Y_i)_{i \in I}$ and $(X_i)_{i \in I}$.

Let us fix any index $i \coloneqq (\tilde{p}, \tilde{q}, \tilde{s}) \in I$. It remains to show that $R \in \Lcl(X_i, Y_i)$, $S \in \Lcl(Y_i, X_i)$. The last claim is trivial. For the first claim, firstly observe that
$R (f_k) \in \mathcal{W}$ for $f = (f_k) \in X_i$ due to Hölder’s inequality. To see the continuity of $R$ we observe
\begin{align*}
\norm{Rf}_{E^s_{p, q}} & = \norm{(2^{s\abs{k}} \norm{\Box_{k} R f}_p)_{k \in \Z^d}}_q \\
& \leq \norm{\Kll{2^{s\abs{k}} \sum_{l \in \Lambda} \sum_{h \in \Lambda} \norm{\Box_k \Box_{k + h} f_{k + h + l}}_{L_p}}_{k \in \Z^d}}_q \\
& \lesssim \norm{\Kll{2^{s\abs{k}} \sum_{l \in \Lambda} \sum_{k \in \Lambda} \norm{f_{k + h + l}}_{L_p}}_{k \in \Z^d}}_{\ell_q} \lesssim \norm{f}_{\ell^{s}_q(L_p)},
\end{align*}
where we used Bernstein’s Multiplier Estimate from Lemma \ref{lem:bernstein} in the penultimate step. Noting that $R \circ S = \one_{Y_i}$ by Proposition \ref{prop:retraction} completes the proof.
\end{proof}

\nocite{*}
\subsection*{Acknowledgements}
We thank Professor Peer Kunstmann from Karlsruhe Institute of Technology (KIT) for a fruitful discussion and some references.

\printbibliography
\end{document}